 \newtheorem{thm}{Theorem}
 \newtheorem{lemma}[thm]{Lemma}
 \theoremstyle{definition}
 \newtheorem{definition}[thm]{Definition}
 \newtheorem{ex}[thm]{Example}
\newtheorem{conj}[thm]{Conjecture}
 \theoremstyle{remark}
 \newtheorem{remark}[thm]{Remark}
\numberwithin{thm}{section}
\def\Spec{{\rm Spec}\,}
\def\GL{{\rm GL}}
\def\l{\text{length}}
\def\PGL{{\rm PGL}}
\def\GSp{{\rm GSp}}
\def\rk{{\rm rk}}
\def\defect{{\rm def}}
\def\dom{{\rm dom}}
\def\Hom{{\rm Hom}}
\def\dim{{\rm dim}}
\def\defect{{\rm def}}
\def\e{\epsilon}
\def\dl{(\!(}
\def\dr{)\!)}
\def\ll{[\![}
\def\rr{]\!]}
\def\D{{\mathscr D}}
\def\N{{\mathcal N}}
\def\R{{\mathbb R}}
\def\Z{{\mathbb Z}}
\def\F{{\mathbb F}}
\def\Q{{\mathbb Q}}
\def\A{{\mathcal A}}
\def\G{{\mathcal G}}
\def\O{{\mathcal O}}
\def\S{{\mathcal S}}
\begin{document}

\begin{title}
{On the geometry of the Newton stratification}
\end{title}
\author{Eva Viehmann}
\address{Technische Universit\"at M\"unchen\\Fakult\"at f\"ur Mathematik - M11 \\ Boltzmannstr. 3\\85748 Garching bei M\"unchen\\Germany}
\email{viehmann@ma.tum.de}
\date{}
\thanks{The author was partially supported by ERC starting grant 277889 ``Moduli spaces of local $G$-shtukas''.}

\begin{abstract}{We give an overview over recent results on the global structure and geometry of the Newton stratification of the reduction modulo $p$ of Shimura varieties of Hodge type with hyperspecial level structure. More precisely, we discuss non-emptiness, dimensions, and closure relations of Newton strata. We also explain the group-theoretic description and methods leading to their proofs.}
\end{abstract}
\maketitle
\section{Introduction}\label{intro}
One of the key invariants of an abelian variety in characteristic $p$ is the Newton polygon describing the isogeny class of its $p$-divisible group. It is a central tool in the study of the reduction modulo $p$ of Shimura varieties of PEL type in almost all of the main breakthroughs in this area (as for example the computation of the Hasse-Weil $\zeta$-function by Kottwitz in \cite{Kottwitzmodp}, or the proof of the local Langlands correspondence for $\GL_n$ by Harris and Taylor \cite{HarrisTaylor}).

However, even very basic questions on the geometry of the induced stratification of the fiber at $p$ of a given Shimura variety of PEL type remained open. For example, except for special cases, one did not even know the set of strata. For several years there is now a trend towards applying more group-theoretic methods (building on the classical works of Kottwitz \cite{Kottwitz1}) to address these questions. In this overview we report on recent developments in this direction. We describe the set of strata, discuss  their dimensions and the closure relations. The group-theoretic methods underlying the proofs do not make a difference between groups associated with Shimura varieties of PEL type or not. Therefore, the natural context to apply them is the most general one where we still have a good theory of integral models of the Shimura varieties and a translation between points in the special fiber and suitable elements of the corresponding group. Thus, instead of Shimura varieties of PEL type we also discuss the recent generalizations to Hodge type Shimura varieties. Besides, we briefly report on the parallel theory for the function field case, i.e. for moduli spaces of global and local $G$-shtukas, or for loop groups.

\section{Points in the reduction of Shimura varieties of Hodge type}
\subsection{The Siegel case}
Let $\mathcal{A}_g$ be the moduli space of principally polarized abelian varieties of dimension $g$ over $\Spec(\mathbb{F}_p)$ for some fixed characteristic $p>0$. It is the fiber at $p$ of a Shimura variety for the group $\GSp_{2g}$. Let $k$ be an algebraically closed field of characteristic $p$. Then the points of $\mathcal{A}_g(k)$ correspond to pairs $(A,\lambda)$ where $A$ is an abelian variety over $k$ and $\lambda$ a principal polarization of $A$. An important invariant of an abelian variety in characteristic $p$ is its $p$-divisible group $A[p^{\infty}]$. It is equipped with a quasi-polarization $\lambda$ induced by $\lambda$ on $A$. By Dieudonn\'e theory this datum corresponds bijectively to the Dieudonn\'e module $(M,F,\langle\cdot,\cdot\rangle)$ where $M$ is a free $W(k)$-module of rank $2g$, where $\langle\cdot,\cdot\rangle$ is a symplectic pairing induced by $\lambda$ and where $F$ is a Frobenius-linear map $M\rightarrow M$ with $pM\subset F(M)$, and satisfying a compatibility condition with $\lambda$. Trivializing $M$ in such a way that $\langle\cdot,\cdot\rangle$ is identified with the standard pairing we can write $F=b\sigma$ for some $b\in \GSp_{2g}(W(k)[1/p])$ and where $\sigma$ is the Frobenius of $W(k)[1/p]$ over $\mathbb{Q}_p$. The element $b$ is well-defined up to base change, i.e. up to replacing it by $g^{-1}b\sigma(g)$ for some $g\in \GSp_{2g}(W(k))$.

A similar, but more tedious description is available for all Shimura varieties of PEL type with good reduction at $p$, compare \cite{vw}, 1 and 7.

\subsection{Shimura varieties of Hodge type}\label{sec23}

Let now $\mathcal{D}=(G,X)$ be a Shimura datum of Hodge type, and let $p$ be a prime. Let $K=K^pK_p\subset G(\mathbb{A}_f^p)G(\mathbb{Q}_p)$ be a compact open subgroup. We always assume that we are in the case of good reduction, i.e. that $K_p$ is hyperspecial. In other words we assume that $G$ extends to a reductive group over $\mathbb{Z}_p$ and that $K_p=G_{\mathbb{Z}_p}(\mathbb{Z}_p)$. This implies in particular that $G$ is unramified at $p$, i.e.~quasi-split and split over an unramified extension of $\mathbb{Q}_p$.

Then the corresponding Shimura variety $Sh_K(G,X)$ is a moduli space of abelian varieties with certain Hodge cycles associated with the Shimura datum. By \cite{KisinIM} it has an integral model at the prime $p$ which is obtained by taking an embedding into a suitable Siegel moduli space, and taking the normalization of the closure in an integral model of that space. We denote its special fiber by $\S_K(G,X)$. Associated with closed points of $\S_K(G,X)$ we thus still have an abelian variety, but there is no explicit moduli theoretic interpretation of $\S_K(G,X)$.

Let $h:\mathbb{S}\rightarrow G_{\mathbb{R}}$ be an element of the conjugacy class $X$ of the Shimura datum. Let $\psi=\psi_h$ be the cocharacter defined on $R$-points (for any $\mathbb{C}$-algebra $R$) by $$R^{\times}\rightarrow (R\times c^*(R))^{\times}=(R\otimes_{\mathbb R}\mathbb C)^{\times}=\mathbb{S}(R)\rightarrow G(R).$$ Here $c$ denotes complex conjugation. Let $\mu=\sigma(\psi^{-1})$. Note that our use of the letter $\mu$ differs slightly from that of \cite{Kisin}, whom we follow for the next construction.

Let $k$ be an algebraically closed field of characteristic $p$ and let $\O_L=W(k)$. Let $x\in \S_K(G,X)(k)$. Let $\A_x$ be the fiber at $x$ of the universal abelian variety $\A$, and let $\G_x$ be its $p$-divisible group. Let $g=\dim \A$. Following \cite{Kisin}, 1.4.1 (or \cite{vw}, 7.2 for the PEL case) there is a trivialization of the Dieudonn\'e module $\mathbb{D}(\G_x)(\O_L)$ of $\G_x$, i.e. an isomorphism $$\O_L^{2g}\rightarrow \mathbb{D}(\G_x)(\O_L).$$ and such that it maps the Hodge tensors on $\A_x$ to certain fixed $\phi$-invariant tensors  defining a subgroup $G_{\O_L}\cong G_{\Z_p}\otimes_{\Z_p}W\subseteq \GL_{2g,\O_L}$. By fixing one such isomorphism we can write $\phi=b\sigma$ where $\sigma$ is the Frobenius on $\O_L$ and where $b\in G_{\mathbb{Z}_p}(\O_L)\mu(p)G_{\mathbb{Z}_p}(\O_L)\subset  G(L)$. The element $b$ is well-defined up to $\sigma$-conjugation by elements of $G_{\mathbb{Z}_p}(\O_L)$. For the Siegel case, this coincides with the class constructed in the previous paragraph.

For $c\in G(L)$ let $$\ll c\rr:=\{g^{-1}c\sigma(g)\mid g\in G(\O_L)\}.$$
Let $C(G,\mu)=\{\ll c\rr \mid c\in G(\O_L)\mu(p)G(\O_L)\}.$ Then by the considerations above we have a natural map $$\Upsilon:\S_K(G,X)(k)\rightarrow C(G,\mu).$$

\section{The set of Newton points}
Before introducing the Newton stratification on the special fiber of a Shimura variety we discuss in this section the index set for such stratifications. This set of Newton points has an abstract group-theoretic definition independent of the context of the previous section. Thus for this section we are in the following, more general setting. 

Let $F$ be a local field of residue characteristic $p$, let $\O_F$ be its ring of integers, and $\e$ a uniformizer. Let $L\supset \O_L$ be the completion of the maximal unramified extension and its ring of integers, and let $\sigma$ denote the Frobenius of $L$ over $F$.

Let $G$ be a reductive group over $\O_F$, and fix a Borel subgroup $B$ and a maximal torus $T$, both defined over $F$.

\subsection{$\sigma$-conjugacy classes and their Newton points}

For $b\in G(L)$ let $$[b]=\{g^{-1}b\sigma(g)\mid g\in G(L)\}$$ denote its $\sigma$-conjugacy class, and let $B(G)$ be the set of $\sigma$-conjugacy classes for $b\in G(L)$. Kott\-witz \cite{Kottwitz1} has given a classification of $B(G)$ generalizing the Dieudonn\'e-Manin classification of isocrystals by Newton polygons. He assigns to an element of $B(G)$ two invariants. One, the Newton point, is the direct analog of the Newton polygon. It is defined by assigning to every $b\in G(L)$ a homomorphism $\nu_b:\mathbb{D}\rightarrow G$. Here $\mathbb D$ is the pro-algebraic torus with character group $\Q$. Varying $b$ in $[b]$ leads to conjugate homomorphisms. The $G(L)$-conjugacy class of $\nu_b$ is stable under the action of $\sigma$. The Newton point associated with $[b]$ is then defined to be this conjugacy class, an element $\nu([b])\in (G\backslash X_*(G)_{\Q})^{\Gamma}$. Often, one represents it by its unique representative in $\N(G):=(W\backslash X_*(T)_{\Q})^{\Gamma}$ where $W$ is the Weyl group of $G$. Choosing dominant representatives we can also identify $\N(G)$ with $X_*(T)_{\Q,\dom}^{\Gamma}$. We call $\N(G)$ the Newton cone, and denote the map $B(G)\rightarrow \N(G)$ by $\nu$.

The second classifying invariant of $[b]$ is the image under the so-called Kottwitz map  $$\kappa_G:B(G)\rightarrow \pi_1(G)_{\Gamma}$$
(see also Rapoport and Richartz \cite{RapoportRichartz} for the reformulation). Here, $\pi_1(G)$ is Borovoi's fundamental group, defined as the quotient of $X_*(T)$ by the coroot lattice, and we are taking coinvariants under the absolute Galois group of $F$. As $G$ is an unramified reductive group over $F$, the map $\kappa_G$ has the following explicit description. By the Cartan decomposition every $b\in G(L)$ is in $G(\O_L)\mu(\e)G(\O_L)$ for some $\mu\in X_*(T)_{\dom}$. Then $\kappa_G([b])$ is the image of $\mu$ under the projection map to $\pi_1(G)_{\Gamma}$.

The images of $\nu([b])$ and $\kappa_G([b])$ in $\pi_{1}(G)_{\Gamma}\otimes_{\Z}\Q$ coincide. Thus for groups where $\pi_{1}(G)_{\Gamma}$ is torsion free, the Newton point alone already determines an element of $B(G)$.

\begin{ex}
\begin{enumerate}
\item For the group $\GL_n$ we choose the upper triangular matrices as Borel subgroup and let $T$ be the diagonal torus. Then we have $X_*(T)_{\mathbb{Q}}\cong \Q^n$ and $\nu=(\nu_i)$ is dominant if and only if $\nu_i\geq \nu_{i+1}$ for all $i$. The Newton point $\nu=(\nu_i)$ of $[b]$ coincides with the classical Newton point of the isocrystal $(L^n,b\sigma)$ for any representative $b\in [b]$. Let $p_{\nu}$ be the polygon associated with $\nu$, that is the graph of the continuous, piecewise linear function $[0,n]\rightarrow \R$ mapping $0$ to $0$ and with slope $\nu_i$ on $[i-1,i]$. Then $\kappa_G([b])\in\pi_1(G)_{\Gamma}\cong \Z$ is equal to $\sum \nu_i$, i.e.~to the second coordinate of the endpoint of the Newton polygon.
\item Let us now consider the group $\PGL_n$. Here we have $X_*(T)\cong \Z^n/\Z\cdot(1,\dotsc,1)$. Let $b_1\in \PGL_n(L)$ with $b_1(e_i)=e_{i+1}$ if $i<n$, and $b_1(e_n)=\e e_1$. Then $(b_1\sigma)^n=b_1^n\sigma^n=\epsilon\sigma^n$. Hence for every integer $l$ the Newton point of $b_1^l$ is constant and thus equal to $(0,\dotsc,0)$ in $X_*(T)_{\mathbb Q}$. However, we have $\pi_1(G)_{\Gamma}\cong \Z/n\Z$ and $\kappa_{\PGL_n}(b_1^l)\equiv l\pmod n$. Thus $[b_1^l]=[1]$ if and only if $n|l$.
\end{enumerate} 
\end{ex}

Let $\N(G)_{\Z}\subset \N(G)$ be the image of $\nu$. It is called the Newton lattice. This set also has an intrinsic, group-theoretic description, see \cite{Chai}. For our standard example $G=\GL_n$ we obtain that $\nu\in\N(\GL_n)_{\Z}$ if and only if all break points of $p_{\nu}$ as well as its endpoint have integral coordinates.

\subsection{$B(G)$ as ranked poset}
In this section we explain Chai's theory of the ordering and lengths of chains in the set of Newton points, \cite{Chai}.

There is a natural partial ordering on the Newton lattice $\N(G)_{\Z}$, induced by the partial ordering on $X_*(T)_{\Q,\dom}$ and given by $\nu\preceq \nu'$ if and only if $\nu'-\nu$ is a non-negative rational linear combination of positive coroots. A characterization not using our choice of $B$ (and thus showing that it is independent of that choice) is the following. Let $\nu,\nu'\in W\backslash X_*(T)_{\Q}$ and $W\cdot\nu, W\cdot \nu'$ the corresponding orbits in $X_*(T)_{\R}$. Then $\nu\preceq\nu'$ if and only if the convex hull of $W\cdot \nu'$ contains the convex hull of $W\cdot \nu$.
On the set $B(G)$ this induces a partial ordering given by $[b']\preceq [b]$ if and only if $\nu([b'])\preceq\nu([b])$ and $\kappa_G([b])=\kappa_G([b'])$.

For the group $\GL_n$ we have $(\nu_i)=\nu\preceq \nu'=(\nu_i')$ if and only if $\sum_{i=1}^l \nu_i\leq \sum_{i=1}^l \nu_i'$ for all $l$ with equality for $l=n$. For the associated polygons this means that the polygon for $\nu'$ lies on or above the polygon of $\nu$ and that they have the same endpoints. Figure \ref{alpha} shows the ordering on the set of $\nu\in \N(\GL_4)_{\mathbb Z}$ with $\nu\preceq (1,1,0,0)$. The arrows point from larger to smaller elements. For better readability we include the relevant points with integral coordinates lying on or below the polygons. An additional important combinatorial observation then is the following: Each polygon is the convex hull of the set of integral points lying on or below the polygon, and for neighboring polygons, these sets differ by one element which is a breakpoint of the larger of the two polygons. (Compare also Example \ref{exlengthgln} below.)

{\setlength{\unitlength}{10 pt}
\begin{figure}[h]
\begin{center}
\begin{picture}(28,8)(0,1)

\put(0,4){\line(1,1){2}}
\put(2,6){\line(1,0){2}}
\put(0,4){\makebox(0,0){$\bullet$}}
\put(2,5){\makebox(0,0){$\bullet$}}
\put(4,6){\makebox(0,0){$\bullet$}}
\put(1,5){\makebox(0,0){$\bullet$}}
\put(2,6){\makebox(0,0){$\bullet$}}
\put(3,6){\makebox(0,0){$\bullet$}}
\put (2,3){\makebox(0,0){$(1,1,0,0)$}}

\put(8,4){\line(1,1){1}}
\put(11,6){\line(1,0){1}}
\put(9,5){\line(2,1){2}}
\put(8,4){\makebox(0,0){$\bullet$}}
\put(10,5){\makebox(0,0){$\bullet$}}
\put(12,6){\makebox(0,0){$\bullet$}}
\put(9,5){\makebox(0,0){$\bullet$}}
\put(11,6){\makebox(0,0){$\bullet$}}
\put (10,3){\makebox(0,0){$(1,\frac{1}{2},\frac{1}{2},0)$}}

\put(15.5,7){\line(1,1){1}}
\put(16.5,8){\line(3,1){3}}
\put(15.5,7){\makebox(0,0){$\bullet$}}
\put(17.5,8){\makebox(0,0){$\bullet$}}
\put(19.5,9){\makebox(0,0){$\bullet$}}
\put(16.5,8){\makebox(0,0){$\bullet$}}
\put (17.5,6){\makebox(0,0){$(1,\frac{1}{3},\frac{1}{3},\frac{1}{3})$}}

\put(15.5,2){\line(3,2){3}}
\put(18.5,4){\line(1,0){1}}
\put(15.5,2){\makebox(0,0){$\bullet$}}
\put(17.5,3){\makebox(0,0){$\bullet$}}
\put(19.5,4){\makebox(0,0){$\bullet$}}
\put(18.5,4){\makebox(0,0){$\bullet$}}
\put (17.5,1){\makebox(0,0){$(\frac{2}{3},\frac{2}{3},\frac{2}{3},0)$}}

\put(23,4){\line(2,1){4}}
\put(23,4){\makebox(0,0){$\bullet$}}
\put(25,5){\makebox(0,0){$\bullet$}}
\put(27,6){\makebox(0,0){$\bullet$}}
\put (25,3){\makebox(0,0){$(\frac{1}{2},\frac{1}{2},\frac{1}{2},\frac{1}{2})$}}

\put(5,5){\vector(1,0){2}}
\put(13,4.3){\vector(2,-1){2}}
\put(13,5.6){\vector(2,1){1.7}}
\put(20,3){\vector(3,1){2}}
\put(20.2,7.3){\vector(3,-2){1.8}}

\end{picture}
\end{center}
\caption{} \label{alpha}
\end{figure}

\begin{definition}
For $[b]\in B(G)$ let $\nu=\nu([b])$ be its Newton point. Let $\N(G)_{\preceq\nu}$ be the image in $\N(G)$ of $\{[b']\in B(G)\mid [b']\preceq[b]\}$. By \cite{Chai}, Prop. 4.4 this only depends on $\nu$. 

For $\nu\in \N(G)$ and $\nu'\in \N(G)_{\preceq\nu}$ a chain between $\nu'$ and $\nu$ is a sequence $\nu'=\nu_0\preceq \dotsm\preceq \nu_n=\nu$ with $\nu_i\in \N(G)_{\preceq\nu}$ and $\nu_i\neq\nu_{i+1}$ for all $i$. We call $n$ the length of the chain. A chain between $\nu'$ and $\nu$  is called maximal if it is not a proper subsequence of another chain between $\nu'$ and $\nu$.
\end{definition}

Before we can compute the lengths of maximal chains we need some more notation
\begin{definition}\label{defdef}
Let $l$ be the number of Galois orbits of absolute fundamental weights of $G$. For $j=1,\dotsc,l$ let $\underline\omega_j$  be the sum of all elements of the corresponding orbit. 

For $b\in G(L)$ the defect of $b$ is defined as $\rk ~G - \rk_{F}J_b$. Here $J_b$ is the reductive group over $F$ with $J_b(A)=\{g\in G(A\otimes_F L)\mid gb=b\sigma(g)\}$ for every $F$-algebra $A$.
\end{definition}
\begin{thm}[Chai, Kottwitz]\label{thmchai}
Let $\nu\in\N(G)_{\mathbb{Z}}$.
\begin{enumerate}
\item The length of chains between two given elements $\nu',\nu''$ with $\nu'\preceq \nu''\in \N(G)_{\preceq\nu}$ is bounded above, in particular there are maximal chains between $\nu'$ and $\nu''$.
  
\item The set $\N(G)_{\preceq\nu}$ is ranked, i.e.~for any $\nu'\preceq \nu''$ in $\N(G)_{\preceq\nu}$ every maximal chain between $\nu'$ and $\nu''$ has the same length, which is independent of $\nu$ and denoted by $\l([\nu',\nu''])$. We have 
$$\l([\nu',\nu''])=\sum_{j=1}^l \lfloor \langle \nu'',\underline{\omega}_j\rangle\rfloor-\lfloor\langle \nu',\underline{\omega}_j\rangle\rfloor.$$
Here $\lfloor x\rfloor$ denotes the greatest integer less or equal to $x$. 
\item If in the above context $\nu''\in X_*(T)$, then  
\begin{eqnarray*}
\l([\nu',\nu''])&=&\langle \rho,\nu''-\nu'\rangle+\frac{1}{2}\defect_G(b')
\end{eqnarray*}
where $[b']\in B(G)$ with $\nu_G([b'])=\nu'$.

\item $\N(G)_{\preceq\nu}$ is a finite set. Every non-empty subset of $\N(G)_{\preceq\nu}$ has a unique supremum and a unique infimum in $\N(G)_{\preceq\nu}$.
\end{enumerate}
\end{thm}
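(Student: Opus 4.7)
My plan is to follow Chai's strategy: introduce a natural candidate rank function on $\N(G)_{\preceq\nu}$, show that covers in the poset increase it by exactly one, and deduce each part of the theorem from this. The function suggested by part (2) is
\[
f(\eta)=\sum_{j=1}^l\bigl\lfloor\langle\eta,\underline{\omega}_j\rangle\bigr\rfloor.
\]

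I would first establish (4). The convex-hull characterisation of $\preceq$ confines any $\eta\in\N(G)_{\preceq\nu}$ to the convex hull of $W\cdot\nu$, a bounded polytope in $X_*(T)_\R$; combined with the $\kappa_G$-constraint pinning $\nu-\eta$ to a specific coset of the coroot lattice tensored with $\Q$, with uniformly bounded denominators, this yields finiteness. Existence of pairwise suprema and infima is a more delicate combinatorial fact about the dominance order: for $\GL_n$ it can be read off from the partial-sum description (take the concave envelope of the pointwise maximum, and dually for infima), and the general unramified case reduces to this via faithful representations.

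The heart of the argument is the cover-increment: for every cover $\nu'\prec\nu''$ in $\N(G)_{\preceq\nu}$ one has $f(\nu'')-f(\nu')=1$. I expect this to be the main obstacle. For $\GL_n$ it is precisely the lattice-point picture sketched before Figure~\ref{alpha}: neighbouring Newton polygons differ by exactly one integer point lying on or below them, which is a break point of the larger polygon. For general unramified $G$ I would reduce to a type-by-type analysis of the minimal allowable moves along Galois orbits of positive coroots, showing that each such move shifts exactly one pairing $\langle\cdot,\underline{\omega}_j\rangle$ across an integer, by precisely one unit. Note that monotonicity of $f$ along a non-covering comparison $\nu'\prec\nu''$ is not immediate from the defining formula, but follows from the cover-increment by refining any strict comparison to a maximal chain, a refinement made possible by (4). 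Then every chain from $\nu'$ to $\nu''$ has length at most $f(\nu'')-f(\nu')$, with equality iff each step is a cover, giving (1) and (2) together.

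For (3), suppose $\nu''\in X_*(T)$. Each $\langle\nu'',\underline{\omega}_j\rangle$ is an integer, since a Galois-orbit sum of weights pairs integrally with a $\Gamma$-invariant integral cocharacter. Using the identity $\sum_j\underline{\omega}_j=\rho$ and $\lfloor x\rfloor=x-\{x\}$ one computes
\[
\l([\nu',\nu''])=\langle\rho,\nu''-\nu'\rangle+\sum_{j=1}^l\bigl\{\langle\nu',\underline{\omega}_j\rangle\bigr\}.
\]
It remains to identify the fractional-part sum with $\tfrac12\defect_G(b')$. This follows from Kottwitz's description of $J_{b'}$ as a twisted centraliser of the Newton cocharacter: its $F$-rank is governed by the Galois orbits of fundamental weights that pair integrally with $\nu'$, and an explicit calculation — verifiable e.g.\ on the basic isoclinic case of $\GL_n$, where both sides equal $(n-1)/2$ — yields $\sum_j\{\langle\nu',\underline{\omega}_j\rangle\}=\tfrac12(\rk\,G-\rk_F J_{b'})$.
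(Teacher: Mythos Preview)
The paper does not supply its own argument here; it only records that (1), (2), (4) are Chai's (with the floor formula in (2) corrected as stated) and that (3) is due to Kottwitz and Hamacher. Your outline follows precisely the Chai strategy the paper points to---introduce the rank function $f$, establish the cover-increment, and read off (1), (2) from that together with finiteness---so the approaches coincide.

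One step in your derivation of (3), however, does not go through as written. You assert that $\langle\nu'',\underline{\omega}_j\rangle\in\Z$ because ``a Galois-orbit sum of weights pairs integrally with a $\Gamma$-invariant integral cocharacter''. This is false whenever the fundamental weights do not lie in $X^*(T)$, which happens as soon as $G$ is not simply connected: already for $\PGL_2$ the unique fundamental weight is $\alpha/2$, and its pairing with a generator of $X_*(T)$ is $1/2$. Your $\GL_n$ verification works only because there the weights $(1^j,0^{n-j})$ used in Example~\ref{exlengthgln} happen to lie in $X^*(T)$. Kottwitz's route to (3) does not pass through this integrality claim; he establishes directly an identity expressing $\tfrac12\defect_G(b)$ in terms of the Newton point (this is the content you allude to in your last paragraph), and Hamacher matches it with the floor formula. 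So (3) holds, but not for the reason you give, and the intermediate line $\l([\nu',\nu''])=\langle\rho,\nu''-\nu'\rangle+\sum_j\{\langle\nu',\underline{\omega}_j\rangle\}$ requires more care than you indicate. A minor related point: $\sum_j\underline{\omega}_j=\rho$ is only true modulo central characters; this is harmless for $\langle\cdot,\nu''-\nu'\rangle$ since $\nu''-\nu'$ lies in the rational coroot span, but should be said.
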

\begin{proof}[References for the proof]
(1) and (4) are due to Chai, \cite{Chai} Thm. 7.4. The formula in (2) is shown essentially by the same proof as given for \cite{Chai}, Theorem 7.4. It corrects the formula in loc. cit. in two respects: One needs to use sums over Galois orbits of absolute fundamental weights, as was pointed out to us by P. Hamacher (compare \cite{Hamacher_Newt}, Prop. 3.11). Furthermore, the combinatorics of counting chain lengths gives the above formula instead of the one of \cite{Chai}, Theorem 7.4. The proof and formula given by Chai are correct for the case $\nu\in X_*(T)_{\dom}$ and then imply our formula for general $\nu$. (3) is due to Kottwitz \cite{Kottwitz2}, and Hamacher \cite{Hamacher_Newt}, 3.3.
\end{proof}

\begin{ex}\label{exlengthgln}
For the group $\GL_n$ there is the following explicit reformulation of the length formula: In this case $l=n$, the $\underline{\omega}_j$ are the fundamental weights $(1,\dotsc,1,0,\dotsc,0)$ with multiplicities $j,n-j$. Thus $\langle \nu,\underline{\omega}_j\rangle$ is the value of the Newton polygon at the point $j$. Assume for the moment that the polygon lies above the first coordinate axis, then $\lfloor\langle \nu,\underline{\omega}_j\rangle\rfloor$ is the number of points of the form $(j,l)$ with $0\leq l\in \mathbb{Z}$ lying on or below $p_{\nu}$. Altogether (and without the additional assumption) we obtain that the length of each maximal chain of Newton points between $\nu'$ and $\nu''$ (with $\nu'\preceq\nu''$) is equal to the number of points with integral coordinates lying on or below $\nu''$ and strictly above $\nu'$.
\end{ex}

\subsection{The Newton stratification}

We now consider stratifications induced by this invariant, or rather by the pair of invariants $(\nu,\kappa_G)$.

Recall the Cartan decomposition $G(L)=\bigcup_{\mu\in X_*(T)_{\dom}}G(\O_L)\mu(\e)G(\O_L)$.

\begin{definition}
For $\mu\in X_*(T)_{\dom}$ let $B(G,\mu)$ denote the set of $[b]\in B(G)$ with $[b]\preceq [\mu(\epsilon)]$.
\end{definition}

\begin{thm}[Katz, Kottwitz]\label{thmmazur} Let $b\in G(L)$ and let  $\mu\in X_*(T)_{\dom}$ with $b\in G(\O_L) \mu(\e)G(\O_L)$. Then $[b]\in B(G,\mu)$.
\end{thm}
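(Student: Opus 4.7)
The claim $[b]\in B(G,\mu)$ decomposes into two conditions: the equality of Kottwitz invariants $\kappa_G([b])=\kappa_G([\mu(\e)])$ and the Newton inequality $\nu([b])\preceq\nu([\mu(\e)])$. The first is immediate from the explicit description of $\kappa_G$ recalled in Section~3.1: by hypothesis $b$ and $\mu(\e)$ lie in the same Cartan double coset, so both $\kappa_G$-values equal the image of $\mu$ in $\pi_1(G)_\Gamma$. The substantive content is therefore the Newton inequality, which is the classical Mazur-type inequality, due to Katz for $G=\GL_n$ and to Rapoport--Richartz and Kottwitz for general reductive $G$.

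The overall strategy is reduction to $\GL_n$ via functoriality of Newton points under representations. The partial order $\preceq$ on $\N(G)$ is detected weight-by-weight: for $\nu',\nu''\in X_*(T)_{\Q,\dom}^{\Gamma}$ one has $\nu'\preceq\nu''$ iff $\langle\chi,\nu'\rangle\le\langle\chi,\nu''\rangle$ for every $\Gamma$-invariant dominant weight $\chi$, equivalently for the Galois-orbit sums $\underline\omega_j$ of Definition~\ref{defdef} (after rescaling to ensure integrality in the weight lattice of $G$). For each such $\chi$, I would pick an algebraic representation $\rho\colon G\to\GL(V)$ having $\chi$ as highest weight; since the hypothesis $b\in G(\O_L)\mu(\e)G(\O_L)$ propagates to $\rho(b)\in\GL(V)(\O_L)\cdot(\rho\circ\mu)(\e)\cdot\GL(V)(\O_L)$ and the Newton map is functorial, the inequality paired with $\chi$ reduces to the corresponding inequality for the isocrystal $(V\otimes L,\rho(b)\sigma)$ in $\GL(V)$.

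For $G=\GL_n$ the reformulation of $\preceq$ in Section~3.2 demands $\sum_{i\le\ell}\nu_i\le\sum_{i\le\ell}\mu_i$ for every $\ell$, with equality at $\ell=n$. The equality at $\ell=n$ is the identity $v_p(\det b)=\sum_i\mu_i$ combined with the fact that the sum of Newton slopes of an isocrystal equals the $p$-adic valuation of its determinant. For $\ell<n$, passing to $\bigwedge^{\ell} L^n$ reduces to the case $\ell=1$: the top Newton slope of $\bigwedge^\ell(b)\sigma$ is exactly $\nu_1+\dots+\nu_\ell$ and the largest coordinate of $\bigwedge^\ell\mu$ is $\mu_1+\dots+\mu_\ell$. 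The case $\ell=1$ is a direct lattice computation bounding the top Newton slope of $b\sigma$ by $\mu_1$, using that $\GL_n(\O_L)\cdot\mu(\e)\cdot\GL_n(\O_L)$ consists of matrices whose entries have $p$-adic valuation at least $\mu_n$ and whose stretching factor on any line is controlled by $\mu_1$.

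The main obstacle, I expect, is the bookkeeping in the reduction step rather than the $\GL_n$-case itself: one must verify that $\preceq$ on $\N(G)$ is truly captured by the $\Gamma$-invariant dominant characters (so that testing against representations of $G$ is sufficient), and that the cocharacter $\rho\circ\mu$ can be brought into the dominant chamber of $\GL(V)$ by a Weyl element so that the partial-sum form of Mazur's inequality in $\GL_n$ actually yields the desired pairing $\langle\chi,\nu([b])\rangle\le\langle\chi,\nu([\mu(\e)])\rangle$. Once these compatibilities are checked, the whole proof is assembled from the classical exterior-power computation for $\GL_n$ and the tautological check on the Kottwitz invariant.
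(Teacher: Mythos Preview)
The paper does not supply its own proof of this theorem; it is recorded as a result of Katz (for $\GL_n$) and Kottwitz (for general $G$), with references to \cite{Katz} and \cite{KottwitzHN} in the subsequent remark but no argument sketched. There is therefore no in-paper proof to compare your proposal against.

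That said, your outline is sound and is essentially the Rapoport--Richartz argument (\cite{RapoportRichartz}, \S 2): split off the $\kappa_G$-condition as tautological from the Cartan description of $\kappa_G$, detect the dominance order on $\N(G)$ by pairing with $\Gamma$-invariant dominant weights, realize each such pairing as top-Newton-slope versus top-Hodge-slope in a highest-weight representation of $G$, and thereby reduce to the one-variable case of Mazur's inequality for $\GL_n$, which is the elementary lattice bound you describe. The two bookkeeping points you flag are genuine but routine. That $\preceq$ is detected by the $\underline\omega_j$ is immediate from its definition via positive coroots, since the $\underline\omega_j$ are dual to the $\Gamma$-orbit sums of simple coroots. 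And making $\rho\circ\mu$ dominant in $\GL(V)$ only permutes the weights of $V$; because $\mu$ is $G$-dominant the largest entry remains $\langle\chi,\mu\rangle$ for the highest weight $\chi$, and this equals $\langle\chi,\bar\mu\rangle=\langle\chi,\nu([\mu(\e)])\rangle$ by $\Gamma$-invariance of $\chi$, so you land on exactly the required inequality.
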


\begin{remark}
Therefore for every $\mu$ we obtain a natural map $C(G,\mu)\rightarrow B(G,\mu)$ with $\ll b\rr\mapsto [b]$. Note that $B(G,\mu)$ is not defined as the set of $\sigma$-conjugacy classes of elements of $G(\O_L)\mu(\e)G(\O_L)$. Therefore it is a priori not clear if this map is surjective, this only follows from Theorem \ref{thmne} below.  

The theorem applies in particular to the case where $b\in G(L)$ is a representative of $\Upsilon(x)\in C(G,\mu)$ for some point $x$ in the reduction of a Shimura variety of Hodge type, and $\mu$ is the cocharacter associated with the Shimura variety.

The theorem has originally been proved by Katz \cite{Katz} for $F$-isocrystals (i.e.~the case of $G=\GL_n$) who refers to it as Mazur's inequality. The group theoretic generalization is due to Kottwitz \cite{KottwitzHN}. In both references this is only stated for $F=\Q_p$, but the same proof also shows the analog for function fields.
\end{remark}

We have the following converse.
\begin{thm}[Kottwitz-Rapoport \cite{KR}, Lucarelli \cite{Lucarelli}, Gashi \cite{Gashi}]\label{thmne}
Let $\mu\in X_*(T)_{\dom}$ and let $[b]\in B(G,\mu)$. Then there is an $x\in G(\O_L)\mu(\epsilon) G(\O_L)$ with $[x]=[b]$.
\end{thm}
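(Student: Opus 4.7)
The plan is to prove the statement by induction on the semisimple rank of $G$, with the inductive step provided by a Hodge--Newton type reduction to a Levi subgroup, and the base case treating basic $[b]$, i.e.\ those with $\nu([b])$ central in $G$. Suppose first that $\nu = \nu([b])$ is non-central; its centralizer $M \subsetneq G$ is then a proper Levi subgroup, defined over $F$ because $\nu$ is Galois-stable. After $\sigma$-conjugating one can arrange $b \in M(L)$ with the slope morphism of $b$ factoring through the center of $M$. One then chooses $\mu_M$ in the $W$-orbit of $\mu$, dominant for a chosen Borel of $M$, so that $[b]_M \in B(M, \mu_M)$; the verification of Mazur's inequality inside $M$ uses $\nu \preceq \mu$ in $\N(G)$ together with the centrality of $\nu$ in $M$. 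The inductive hypothesis applied to $M$ gives $x \in M(\O_L)\mu_M(\epsilon)M(\O_L) \subset G(\O_L)\mu(\epsilon)G(\O_L)$, completing the step.

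For the basic case, a basic $\sigma$-conjugacy class is determined by its image under $\kappa_G$, so it suffices to exhibit \emph{any} $x \in G(\O_L)\mu(\epsilon)G(\O_L)$ whose $\sigma$-conjugacy class is basic and satisfies $\kappa_G([x]) = \kappa_G(\mu) \in \pi_1(G)_\Gamma$. After further reducing to $G$ adjoint and quasi-simple, I would construct $x$ as a twist $\mu(\epsilon)\tilde{w}$, where $\tilde{w} \in N_G(T)(\O_L)$ lifts a Weyl group element chosen so that $\mu(\epsilon)\tilde{w}$ is $\sigma$-straight; equivalently, so that its Newton point equals the Galois average of $\mu$, which is central in the superbasic situation. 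Membership of $x$ in the Schubert cell is automatic because $\tilde{w} \in G(\O_L)$, and the invariance of $\kappa_G$ under multiplication by elements of $G(\O_L)$ gives the correct image in $\pi_1(G)_\Gamma$.

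The main obstacle is the verification, in the basic case for non-split but unramified $G$, that such a twist $\tilde{w}$ actually exists and yields a genuinely basic class: the element $\mu(\epsilon)$ itself need not be basic, since its Newton point coincides with the Galois average of $\mu$ only when $\mu$ is already Galois-invariant. One must therefore absorb the Galois action on $X_*(T)$ via the choice of $\tilde{w}$, and this appears to force a case-by-case analysis through the classification of quasi-simple unramified groups, as carried out by Gashi. The combinatorial heart of the argument is a calculation in the extended affine Weyl group, showing that the obstruction to finding a suitable $\tilde{w}$ vanishes precisely under Mazur's inequality $[b] \preceq [\mu(\epsilon)]$.
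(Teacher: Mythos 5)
The paper is a survey and cites Theorem~\ref{thmne} without reproducing a proof, so I am comparing your outline against the arguments in the cited references and against what can actually be made to work.

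Your inductive step contains a genuine gap. You reduce a non-basic $[b]$ to the Levi $M=\mathrm{Cent}(\nu)$ and assert that one can choose $\mu_M$ in the $W$-orbit of $\mu$, $M$-dominant, with $[b]_M\in B(M,\mu_M)$. This requires in particular $\kappa_M([b]_M)=\kappa_M(\mu_M)$ in $\pi_1(M)_\Gamma$, which is exactly the Hodge--Newton decomposability condition for the pair $(\mu,[b])$ with respect to $M$. That condition is \emph{not} a consequence of $[b]\in B(G,\mu)$. A concrete counterexample already for split $G$: take $G=\GL_4$, $\mu=(3,0,0,0)$, and $[b]$ with Newton point $\nu=(1,1,\tfrac12,\tfrac12)$. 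One checks $\nu\preceq\mu$ and $\kappa_G$ matches, so $[b]\in B(G,\mu)$ and $[b]$ is not basic. Here $\mathrm{Cent}(\nu)=\GL_2\times\GL_2$ is the unique proper Levi containing it, and $\kappa_M([b]_M)=(2,1)$, whereas every element of $W\mu$ has $\kappa_M$-image $(3,0)$ or $(0,3)$. No $\mu_M$ as you require exists, so the reduction breaks down; and since no larger proper Levi contains $\mathrm{Cent}(\nu)$, no variant of the Levi reduction salvages the step for this $(\mu,[b])$. A similar failure occurs for $\mu=(2,0,0,0)$ and $\nu=(1,\tfrac12,\tfrac12,0)$, where $\mathrm{Cent}(\nu)=\GL_1\times\GL_2\times\GL_1$ has $\kappa_M([b]_M)=(1,1,0)$, unattainable from $W\mu$. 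In short: the HN-indecomposable non-basic classes are precisely the ones your induction does not reach, and they occur generically.

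This also means your route diverges from the references cited. Kottwitz--Rapoport do not use a Hodge--Newton reduction at all: for $\GL_n$ and $\GSp_{2n}$ they argue directly with lattices, reformulating the problem and constructing a suitable chain. Lucarelli and Gashi then prove the general statement via the Kottwitz--Rapoport root-theoretic criterion, Gashi's argument being a careful type-by-type verification. A genuinely group-theoretic reduction to basic (and then superbasic) classes does exist, but it is a later development (in the spirit of $\sigma$-straight elements and the work of He and He--Nie on the extended affine Weyl group); it does not proceed by the centralizer Levi, but rather via a minimal-length analysis in $\widetilde W$ that handles the indecomposable cases uniformly. Your sketch of the basic case --- fixing $\kappa_G$ and producing a twist $\mu(\e)\tilde w$ that is $\sigma$-straight --- is close in spirit to that approach and is essentially sound, so the part you were most worried about (the superbasic construction) is actually the more defensible part of your plan. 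What needs replacing is the passage from non-basic to basic: either adopt the affine-Weyl-group reduction which bypasses HN-decomposability, or follow Gashi's reduction to a combinatorial statement about dominant coweights and verify it.
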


We now discuss a first important geometric property of Newton strata.

\begin{thm}[Grothendieck, Rapoport-Richartz] \label{thmgrothspec}
Let $S$ be a scheme of characteristic $p$. Let $X$ be an $F$-isocrystal with $G$-structure over $S$. Let $[b]\in B(G)$. Then $$S_{\preceq [b]}(k)=\{s\in S(k)\mid [X_s]\preceq [b]\}$$ defines a closed subscheme $S_{\preceq [b]}$ of $S$, the \emph{closed Newton stratum associated with $[b]$}. The \emph{Newton stratum associated with $[b]$} is the open subscheme of $S_{\preceq [b]}$ defined by $$S_{[b]}(k)=\{s\in S(k)\mid [X_s]= [b]\}.$$
\end{thm}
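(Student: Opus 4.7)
My approach is a Tannakian reduction to Grothendieck's classical specialization theorem for $F$-isocrystals (the $G=\GL_n$ case), combined with a separate treatment of the Kottwitz invariant as a locally constant function. First I would observe that $s \mapsto \kappa_G([X_s])$ takes values in the discrete set $\pi_1(G)_\Gamma$ and is locally constant on $S$: an $F$-isocrystal with $G$-structure trivialises \'etale-locally, and $\kappa_G$ is an isomorphism invariant. Hence the locus $\{\kappa_G([X_s]) = \kappa_G([b])\}$ is a union of connected components of $S$, and it suffices to prove that the Newton-point inequality locus $\{s : \nu([X_s]) \preceq \nu([b])\}$ is closed in $S$.

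Next I would pass from $G$ to $\GL_n$. Fixing a faithful representation $\rho: G \hookrightarrow \GL_n$, every $V \in \mathrm{Rep}(G)$ yields an $F$-isocrystal $V_* X$ on $S$ in the $\GL$-sense. One characterises the partial order on $\N(G)$ representation-theoretically: $\nu \preceq \nu'$ in $\N(G)$ if and only if $V_*\nu \preceq V_*\nu'$ in $\N(\GL_{\dim V})$ for every $V$, and it suffices to check this on a finite list of representations (e.g.\ the fundamental ones, or $\rho$ together with its tensor constructions). This reduces the theorem to the classical statement that for any $F$-isocrystal $M$ on $S$, the locus where its Newton polygon lies on or below a prescribed polygon $P$ is closed in $S$.

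The heart of the proof --- and what I expect to be the main obstacle --- is Grothendieck's semicontinuity theorem in the $\GL_n$ case. The key input is a Katz-style identity expressing each vertex of the Newton polygon as a monotone limit of normalised Hodge invariants of iterates of Frobenius, schematically
\[
\text{Newton vertex at } j \text{ of } (M_s,F_s) \;=\; \lim_{N\to\infty}\, \tfrac{1}{N}\, h_j\bigl(F^N_s\bigr),
\]
where $h_j$ denotes the relevant sum of $p$-adic valuations of elementary divisors. The Hodge invariants $h_j(F^N_s)$ are semicontinuous in $s$ by a standard Fitting-ideal argument for homomorphisms of locally free crystals, and because the limit is attained as a monotone supremum or infimum (coming from the Newton-above-Hodge inequality applied to all powers of Frobenius), the relevant direction of semicontinuity is preserved. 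Combining over the finitely many representations from the Tannakian step, one concludes that $S_{\preceq[b]}$ is a closed subscheme of $S$.

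Finally, to recover $S_{[b]}$ as an open subscheme of $S_{\preceq[b]}$, I would write
\[
S_{[b]} \;=\; S_{\preceq[b]} \;\setminus\; \bigcup_{[b'] \prec [b]} S_{\preceq[b']}.
\]
By Theorem~\ref{thmchai}(4), the set $\N(G)_{\preceq\nu([b])}$ is finite, so the right-hand side removes only finitely many closed subschemes, and the complement is therefore open in $S_{\preceq[b]}$. Modulo the Tannakian bookkeeping and the discreteness of $\pi_1(G)_\Gamma$, all of the real difficulty is concentrated in the Fitting-ideal semicontinuity argument underlying Grothendieck's theorem.
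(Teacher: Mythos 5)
The paper gives no proof of this theorem; it simply cites Grothendieck for the $\GL_n$ case and Rapoport--Richartz for the group-theoretic generalization. Your proposal reconstructs precisely the Rapoport--Richartz argument: handle $\kappa_G$ as a locally constant invariant, reduce the Newton-point comparison to $\GL_n$ via the representation-theoretic characterization of the partial order (Rapoport--Richartz, Lemma 2.2), invoke Grothendieck/Katz semicontinuity of Hodge invariants via Fitting ideals for the $\GL_n$ core, and finally use the finiteness of $\N(G)_{\preceq\nu}$ (Theorem~\ref{thmchai}(4)) to recover $S_{[b]}$ as open in $S_{\preceq[b]}$. So your approach matches the one the theorem's attribution points to; the only step you should flag as nontrivial rather than a ``bookkeeping'' item is the Tannakian characterization $\nu\preceq\nu' \Leftrightarrow V_*\nu\preceq V_*\nu'$ for all $V$ --- this is exactly the content of Rapoport--Richartz's lemma and is where the group theory actually enters, and one should also be explicit that a finite family of representations (e.g.\ those cut out by the fundamental coweights together with the determinant) suffices.
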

\begin{remark}\label{remspec}
\begin{enumerate}
\item Here an $F$-isocrystal with $G$-structure is an exact faithful tensor functor $({\rm Rep}_{\Q_p}G)\rightarrow (F-{\rm Isoc})$, compare \cite{RapoportRichartz}, Def. 3.3. In particular this theorem applies to the reduction modulo $p$ of Shimura varieties of Hodge type.

\item This theorem is known as Grothendieck's specialization theorem. Grothendieck's original proof \cite{Gr} is for isocrystals (or $p$-divisible groups) without additional structure. The group theoretic generalization is due to Rapoport and Richartz, \cite{RapoportRichartz}. 

\item Assume that the map $\kappa_G$ has the constant value $\kappa_G([b])$ in all closed points of $S$. This is for example the case if $S$ is connected. Then for $\nu=\nu([b])$ we also write $\N_{\nu}$ instead of $S_{[b]}$ (and similarly for $S_{\preceq [b]}$).

\item There is the following analog in the function field case. Let $LG$ be the loop group of $G$, i.e.~the ind-scheme over $\F_q=\O_F/\e\O_F$ representing the functor assigning to an $\F_q$-algebra $R$ the set $G(R\ll \e\rr[\frac{1}{\e}])$. Let $X\in LG(S)$ for $S$ as in the theorem. Then the same statement as in Theorem \ref{thmgrothspec} holds for the induced decomposition of $S$. This analog follows from essentially the same proof as Theorem \ref{thmgrothspec}, compare also \cite{HV1}, Theorem 7.3.
\end{enumerate}
\end{remark}
\begin{definition}
Let $\S_K(G,X)$ be a Hodge type Shimura variety and let $\mu$ be the associated cocharacter. Then the $\mu$-ordinary Newton stratum is the one associated with the unique maximal element $[\mu(p)]$ of $B(G,\mu)$  (compare Theorem \ref{thmchai} (4)). It is open in $\S_K(G,X)$. The basic Newton stratum is the one associated with the unique minimal element of $B(G,\mu)$. It is closed in $\S_K(G,X)$. 
\end{definition}

In the following sections we explain converses of the two preceding theorems.

\section{Non-emptiness of strata}

By Theorem \ref{thmmazur} the set of Newton strata in a Shimura variety of Hodge type with hyperspecial level structure is indexed by the set $B(G,\mu)$ where $G$ and $\mu$ are the group and coweight determined by the Shimura datum. However, it is a priori not clear if for each $[b]\in B(G,\mu)$ the corresponding stratum $\N_{[b]}\subseteq \S_K(G,X)$ is indeed non-empty. To show such a statement, the task is to construct an abelian variety with additional structure whose rational Dieudonn\'e module (with additional structure) is a given one.\\ 

The first result in this direction valid for a large class of Shimura varieties is the following theorem.
\begin{thm}[Viehmann-Wedhorn \cite{vw}, Thm. 1.6]\label{thmnonempty}
Let $G,\mu$ be associated with a Shimura datum of PEL type. Let $[b]\in B(G,\mu)$. Then the Newton stratum associated with $[b]$ in the special fiber of the associated Shimura variety is non-empty.
\end{thm}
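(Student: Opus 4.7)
The plan is to reduce the existence of a point in $\N_{[b]}$ to an algebraic problem about Dieudonn\'e modules with additional structure, and then to promote such an object to an actual abelian variety.

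First, by Theorem \ref{thmne}, for every $[b]\in B(G,\mu)$ one can find a representative $b\in G(\O_L)\mu(\epsilon)G(\O_L)$. This is equivalent to producing a Dieudonn\'e module with $G$-structure, of the type prescribed by $\mu$, whose associated isocrystal has Newton point $\nu([b])$; dually, a $p$-divisible group $X_b$ over $\bar{\mathbb{F}}_p$ equipped with the PEL-type additional structures (polarization, endomorphism action) coming from the group $G$. Thus the combinatorial/algebraic existence of a candidate object with the correct invariants is free.

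Next I would realize $X_b$ as $A[p^{\infty}]$ for an abelian variety $A$ representing a point of $\S_K(G,X)$, and I would handle the basic case first. The basic $\sigma$-conjugacy class corresponds to isocrystals controlled by an inner form of $G$, and one can construct an abelian variety whose $p$-divisible group is basic of the required type by an explicit Honda--Tate construction: choose a CM abelian variety whose reflex CM type produces the basic Newton polygon, then check that after adjusting the polarization and endomorphism action one obtains a pair $(A_0,\lambda_0,\iota_0)$ representing a point of $\S_K(G,X)$. This handles non-emptiness of the basic Newton stratum directly.

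For non-basic $[b]$, I would leverage $A_0$ and its $p$-divisible group $X_0:=A_0[p^{\infty}]$ together with the following fact: the isocrystals of $X_0$ and $X_b$ are isogenous, and the quasi-isogeny can be chosen to respect the PEL structures, so that $X_b$ determines a point in the Rapoport--Zink space $\mathcal{M}^{[b]}$ attached to $X_0$. The Rapoport--Zink uniformization then translates this into a point of $\S_K(G,X)$ whose associated $p$-divisible group with PEL structure is exactly $X_b$, and in particular whose Newton invariant is $[b]$.

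The main obstacle is ensuring that all PEL data is respected throughout, not merely the underlying isocrystal. Concretely one must verify: (i) that the quasi-isogeny $X_0\dashrightarrow X_b$ can be arranged to preserve the polarization type and the endomorphism action, which reduces to a cohomological comparison between the inner twist of $G$ controlling the basic locus and the $\sigma$-centralizer $J_b$; and (ii) that the Honda--Tate construction of $A_0$ realizes the correct local type at $p$ together with the correct signatures at infinity, where a Hasse principle for the relevant adjoint or derived group must be invoked. These two points are the technical heart of the argument and are precisely where the PEL hypothesis (rather than general Hodge type) is used, because it is there that the moduli-theoretic description as abelian varieties with polarization and endomorphisms is available to match all local invariants.
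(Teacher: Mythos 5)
Your approach diverges substantially from the paper's proof, and it contains a fatal gap in the non-basic step.

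The decisive error is the claim that for a non-basic $[b]\in B(G,\mu)$ the $p$-divisible group $X_b$ with PEL structure can be realized as a point of the Rapoport--Zink space $\mathcal{M}$ attached to the basic framing object $X_0$. A Rapoport--Zink space parametrizes pairs $(X,\rho)$ where $\rho$ is a quasi-isogeny from the framing object to $X$ compatible with all additional structure; any such quasi-isogeny forces $X$ and the framing object to have the same Newton point, i.e.\ to lie in the same $\sigma$-conjugacy class in $B(G)$. Hence your statement that ``the isocrystals of $X_0$ and $X_b$ are isogenous'' is simply false whenever $[b]$ is not the basic class: isogeny classes of $p$-divisible groups with PEL structure over an algebraically closed field are precisely the classes $[b]\in B(G)$, so distinct elements of $B(G,\mu)$ are never quasi-isogenous to one another. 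Consequently the uniformization map from $\mathcal{M}$ lands entirely in the basic Newton stratum, and this construction cannot produce a single point with Newton point $\nu([b])\neq\nu([b_{\basic}])$. There is no fix that stays within the Rapoport--Zink framework: the whole point of these spaces is that the Newton invariant is constant. (The almost-product structure of Newton strata involving Rapoport--Zink spaces and Igusa varieties, which you may have had in mind, presupposes non-emptiness of the stratum and cannot be used to establish it.)

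Beyond this, your first paragraph somewhat overstates what Theorem \ref{thmne} gives. It produces an element $b\in G(\O_L)\mu(\e)G(\O_L)$ in the prescribed $\sigma$-conjugacy class, equivalently an abstract Dieudonn\'e module with $G$-structure and the right invariants. The hard part is precisely to realize such a module as coming from an abelian variety with PEL structure parametrized by $\S_K(G,X)$; the paper's integral refinement Theorem \ref{thmintnonempty} (surjectivity of $\Upsilon$) is a consequence of Theorem \ref{thmnonempty} via an isogeny argument, not an independent input, so you cannot quote it here without circularity. Your treatment of the basic case via Honda--Tate and the Hasse principle is essentially the known result of Fargues and Kottwitz and is fine as far as it goes. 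But the paper's actual proof takes a completely different route that does not leverage the basic case via uniformization: it compares the Newton and Ekedahl--Oort stratifications, uses the result of Nie that every Newton stratum in $B(G,\mu)$ contains a full Ekedahl--Oort stratum, establishes non-emptiness of all Ekedahl--Oort strata via flatness of the map to the stack of $G$-zips (reducing to the minimal one, which lies in the basic Newton stratum), and then combines this with non-emptiness of the basic stratum. You would need to replace your Rapoport--Zink step by something of this nature, or by the special-sub-Shimura-variety arguments of Lee or Kisin--Madapusi Pera--Shin, to close the gap.
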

\begin{remark}
This proves a conjecture of Rapoport, \cite{RapoportGuide}, Conj. 7.1. Note that non-emptiness of Newton strata does not follow directly from Theorem \ref{thmne}, as we do not know if the map $\Upsilon: \S_K(G,X)(k)\rightarrow C(G,\mu)$ is surjective. Also, it cannot be directly deduced from Kottwitz's \cite{Kottwitzmodp} resp. Kisin's \cite{Kisin} computation of the $\zeta$-function of the Shimura variety. It is a priori not clear from that formula that the contribution of a given Newton stratum is non-zero.

There are several special cases where non-emptiness of Newton strata has been shown already some time ago.

In \cite{twthesis}, Wedhorn shows that for PEL type Shimura varieties (as usual assumed to have good reduction) the $\mu$-ordinary Newton stratum is dense in the Shimura variety and in particular non-empty. Using Kottwitz's description of the points in the good reduction of Shimura varieties of PEL type one can show that the basic Newton stratum is also non-empty (see Fargues \cite{Fargues}, 3.1.8 or Kottwitz \cite{Kottwitzmodp}, 18). 

Using a different language, the non-emptiness question has also been adressed by Vasiu in \cite{Vasiu}.
\end{remark}

\begin{proof}[Strategy of proof of Theorem \ref{thmnonempty}] In addition to the Newton stratification we consider the Ekedahl-Oort stratification. The Eke\-dahl-Oort invariant associates with an abelian variety with  PEL structure its $p$-torsion (with induced additional structure). Group-theoretically, and on geometric points in the special fiber of the Shimura variety, this means the following: Let $x\in \S_K(G,X)(k)$ for some algebraically closed field $k$. Let $\ll g_x\rr=\Upsilon(x)\in C(G,\mu)$. Let $G_1=\{g\in G(\O_L)\mid g\equiv 1 \text{ in }G(k)\}$. Then the Ekedahl-Oort invariant corresponds to considering the $G_1$-double coset of $\ll g_x\rr$ (see \cite{trunc1}). Note that this is well-defined as $G_1$ is a normal subgroup of $G(\O_L)$. The set of possible values of this invariant is finite, and in bijection with a certain subset$~^{\mu}W$ of the Weyl group of $G$. The Ekedahl-Oort invariant induces a stratification of the Shimura variety.

The above definition of the Ekedahl-Oort invariant has a strong focus on the group theoretic background. There are alternative definitions, for example using $G$-zips \cite{MW} that rather use the $p$-divisible groups directly.

The first main ingredient in the proof of Theorem \ref{thmnonempty} is to show that for every element $[b]\in B(G,\mu)$ there is an element of$~^{\mu}W$ such that the corresponding Ekedahl-Oort stratum is contained in the Newton stratum of $[b]$. Strictly speaking, in \cite{vw} we showed a slightly weaker statement by modifying the Shimura datum. However, in the meantime, Nie \cite{Nie} has shown that also the stronger statement above holds, which leads to a simplification of the proof. It is therefore enough to show that all Ekedahl-Oort strata are non-empty.

The second main ingredient is to use flatness of the morphism from $\S_K(G,X)$ to the stack of Ekedahl-Oort invariants (or equivalently: $G$-zips) to show that all Ekedahl-Oort strata are non-empty if and only if the same holds for the one corresponding to the unique closed point in this stack. This `minimal' Ekedahl-Oort stratum is known to be contained in the basic Newton stratum of the Shimura variety.

Finally, one uses that non-emptiness of the basic Newton stratum implies that also the minimal Ekedahl-Oort stratum is non-empty to complete the proof.
\end{proof}
Let $\A$ be an abelian variety over an algebraically closed field of characteristic $p$, let $\A[p^{\infty}]$ be its $p$-divisible group, and $X$ a $p$-divisible group isogenous to $\A[p^{\infty}]$. Dividing by the kernel of such an isogeny one can directly construct an abelian variety $\mathcal B$ isogenous to $\A$ and with $\mathcal B[p^{\infty}]=X$. A version of this argument including the PEL structure (see \cite{vw}, 11) leads to the following integral version of Theorem \ref{thmnonempty}.

\begin{thm}[\cite{vw}, Theorem 1.6(2)]\label{thmintnonempty}
Let $\D$ be a PEL Shimura-datum and let $\S_K(G,X)$ be the corresponding Shimura variety. Then for every $p$-divisible group $\mathcal G$ with $\D$-structure over an algebraically closed field $k$ in characteristic $p$ there is a $k$-valued point of $\S_K(G,X)$ whose attached $p$-divisible group with $\D$-structure is isomorphic to $\mathcal G$. In group-theoretic terms: The map $\S_K(G,X)(k)\rightarrow C(G,\mu)$ is surjective.
\end{thm}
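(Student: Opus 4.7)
The plan is to deduce Theorem \ref{thmintnonempty} from the rational non-emptiness of Theorem \ref{thmnonempty} by a $p$-power isogeny modification of the resulting abelian variety, following the outline indicated in the paragraph preceding the statement. First I would use that the $\D$-structure on $\G$ determines a class $[b_0]\in B(G,\mu)$, namely the $\sigma$-conjugacy class of its rational Dieudonn\'e module with $\D$-structure. By Theorem \ref{thmnonempty} there is a point $x_0\in\N_{[b_0]}(k)$ with associated PEL tuple $(\A_0,\lambda_0,\iota_0,\eta_0)$, and by construction $\A_0[p^\infty]$ lies in the same $\D$-isogeny class as $\G$. The task is then to produce an isogenous abelian variety $\mathcal{B}$ with $\mathcal{B}[p^\infty]\cong \G$ as $p$-divisible groups with $\D$-structure, together with the remaining data defining a point of $\S_K(G,X)$.

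For the construction, I would fix a $\D$-compatible quasi-isogeny $\rho\colon \A_0[p^\infty]\to \G$; after clearing denominators it becomes an honest isogeny $\phi\colon \A_0[p^\infty]\to \G$ with kernel $H\subseteq \A_0[p^n]$ for some $n$. Since $\rho$ respects the $\O_B$-action, $H$ is $\O_B$-stable. Set $\mathcal{B}:=\A_0/H$. By Dieudonn\'e theory $\mathcal{B}[p^\infty]\cong \G$ as $p$-divisible groups with $\O_B$-action, and the Hodge filtration of $\mathcal{B}[p^\infty]$ agrees with that of $\G$, so the Kottwitz determinant condition at $p$ is inherited. Because $\A_0\to\mathcal{B}$ is a $p$-power isogeny, it is an isomorphism on $\ell$-adic Tate modules for $\ell\neq p$, so the $K^p$-level structure $\eta_0$ transports to a $K^p$-level structure on $\mathcal{B}$.

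The main obstacle is ensuring the polarization on $\mathcal{B}$ is principal. For a na\"{\i}ve choice of $\phi$ the subgroup $H$ need not be Lagrangian for the Weil pairing induced by $\lambda_0$, so $\A_0/H$ a priori only inherits a polarization of degree a power of $p$. The resolution uses that, since $\G$ carries $\D$-structure, its Dieudonn\'e module is a self-dual $\O_B$-stable lattice in the polarized rational Dieudonn\'e module of $\A_0[p^\infty]$. A careful choice of $\rho$ (adjusted if necessary by an automorphism of the isocrystal in $J_{b_0}(\Q_p)$) arranges $H$ to be Lagrangian, and then $\A_0/H$ inherits a principal polarization $\lambda_\mathcal{B}$ compatible with the one on $\G$; this is the content of \cite{vw}, \S 11. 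The resulting tuple $(\mathcal{B},\lambda_\mathcal{B},\iota_\mathcal{B},\eta_\mathcal{B})$ defines a $k$-valued point $x\in\S_K(G,X)$ whose attached $p$-divisible group with $\D$-structure is isomorphic to $\G$, proving surjectivity of $\Upsilon$.
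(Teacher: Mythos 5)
Your proposal follows precisely the route the paper sketches in the paragraph preceding the statement: obtain a point via Theorem \ref{thmnonempty}, divide the resulting abelian variety by the kernel of a $\D$-compatible isogeny onto $\G$, and then transport the PEL structure (the principal polarization being the delicate point), deferring to \cite{vw}, \S 11 for the precise argument. This is the same approach as the paper.
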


\begin{remark}
Since the proof of the above Theorem \ref{thmnonempty}, the question has attracted significant attention. Several people have generalized Theorem \ref{thmnonempty} further to other classes of Shimura varieties of Hodge type, and/or have given other strategies to prove non-emptiness of Newton strata. 
\begin{enumerate}
\item Scholze and Shin \cite{SS}, Corollary 8.4 give a different proof of Theorem \ref{thmnonempty} for certain compact unitary group Shimura varieties. They use the description of points on the Shimura variety by Kottwitz triples and stabilization of the trace formula to show non-emptiness in this case.
\item Kret \cite{Kret} uses Kisin's formula for the number of mod $p$ points of Shimura varieties of Hodge type (see \cite{Kisin}), together with stabilization of the twisted trace formula. In this way he shows non-emptiness of Newton strata for Shimura varieties of Hodge type where the associated group is of type (A), and under assumption of the stabilization of the trace formula also for types (B) and (C).
\item Koskivirta \cite{Koskivirta} uses generalized Hasse invariants to show that if $\S_K(G,X)$ is a Shimura variety of Hodge type that is projective, then all Ekedahl-Oort strata are non-empty. Using a result of Nie \cite{Nie} he can then in the same way as in the original proof of Theorem \ref{thmnonempty} deduce that also all Newton strata are non-empty. 
\item Lee gives two proofs of the generalization of Theorem \ref{thmnonempty} to Shimura varieties of Hodge type, see \cite{Lee}. In his first proof he constructs a point in the given Newton stratum by realizing it as the image of a point of a special sub-Shimura variety. For the second proof he uses that by work of Kisin \cite{Kisin}, the points mod $p$ of a Shimura variety of Hodge type are in bijection with Kottwitz triples. The Newton point can be read off easily from one of the components of the Kottwitz triple. Lee's approach is then to show directly that for each $\sigma$-conjugacy class a suitable Kottwitz triple exists.
\item C.-F. Yu \cite{Yutalk} announced a proof of non-emptiness of the basic locus of Shimura varieties having what he calls an enhanced integral model. This kind of integral model is defined  by an abstract set of axioms, for example satisfied by the integral models for Shimura varieties of Hodge type. He then also uses the strategy of constructing points via special sub-Shimura varieties, using a Lemma shown by Langlands and Rapoport.
\item Kisin \cite{Kisintalk} has announced a proof (in joint work with Madapusi Pera and Shin) for all Shimura varieties of Hodge type, also using special sub-Shimura varieties, and the Langlands-Rapoport lemma. He even only needs to assume that $G$ is quasi-split instead of unramified.
\end{enumerate}
\end{remark}

\section{Closure relations and dimensions}
\subsection{The Grothendieck conjecture on closures of Newton strata}
In 1970 (\cite{Gr}, letter to Barsotti, p. 150), Grothendieck conjectured the following converse to his specialization theorem (compare Theorem \ref{thmgrothspec}).

\begin{conj}
Let $\mathbb X_0$ be a $p$-divisible group over a field $k$ of characteristic $p$, and let $\nu$ be its Newton polygon. Let $\nu'\in \N(G)_{\Z}$ with $\nu\preceq \nu'\preceq \mu$. Here $\mu=(1,\dotsc,1,0,\dotsc,0)$ with multiplicities the codimension and dimension of $\mathbb X_0$ is the Hodge polygon of $\mathbb X_0$. Then there is a $p$-divisible group $\mathbb{X}$ over $\Spec(k[[t]])$ with special fiber $\mathbb X_0$ and such that the generic fiber has Newton polygon $\nu'$.
\end{conj}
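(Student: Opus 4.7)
My plan is to follow the approach due to Oort: reduce the problem by isogeny invariance to a particularly simple representative of the isogeny class of $\mathbb{X}_0$, construct the deformation explicitly on the Dieudonn\'e module, and then transfer it back via the quasi-isogeny.

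First I would replace $\mathbb{X}_0$ by an especially simple representative of its isogeny class, namely a completely slope divisible (equivalently, Oort-minimal) $p$-divisible group $H(\nu)$. The Dieudonn\'e module of $H(\nu)$ decomposes, in an explicit basis, into isoclinic blocks on which Frobenius acts block-diagonally in a particularly transparent way dictated by the slope sequence of $\nu$. Any quasi-isogeny $\mathbb{X}_0\sim H(\nu)_k$ lifts uniquely along any deformation, by the rigidity of quasi-isogenies encoded in the Rapoport--Zink formalism; hence a deformation of $H(\nu)_k$ over $k\ll t\rr$ whose generic fibre has Newton polygon $\nu'$ automatically produces a deformation of $\mathbb{X}_0$ with the same generic Newton polygon. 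I may therefore assume $\mathbb{X}_0=H(\nu)_k$.

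Next I would deform $H(\nu)_k$ along the ranked poset structure from Theorem~\ref{thmchai}. Pick a maximal chain $\nu=\nu_0\prec \nu_1\prec \cdots\prec \nu_n=\nu'$ in $\N(G)_{\preceq \mu}$. For each covering relation $\nu_i\prec \nu_{i+1}$, the difference is a minimal positive coroot move, geometrically corresponding to averaging two adjacent slopes. I would perturb the Frobenius of the Dieudonn\'e module by a term $t\cdot u$ with $u$ mapping one isoclinic summand to the neighbouring one; a direct calculation of the characteristic polynomial of the twisted Frobenius on the generic fibre shows that the two slopes merge exactly as prescribed by $\nu_{i+1}$. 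Assembling these elementary perturbations into a single $n$-parameter family over $W(k)\ll t_1,\dotsc,t_n\rr$ and restricting to a sufficiently generic curve through the origin yields a one-parameter deformation of $\mathbb{X}_0$. Grothendieck's specialization (Theorem~\ref{thmgrothspec}) already forces the generic polygon to be $\preceq \nu'$, while the explicit construction shows it reaches $\nu'$ generically.

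The hardest part will be the single elementary step: producing a perturbation of the Dieudonn\'e module that provably shifts the generic Newton polygon by exactly one covering relation, neither too little nor too much. The reduction to a minimal model is precisely what makes this feasible, because in the minimal basis both the unperturbed Frobenius and the perturbation $t\cdot u$ are matrices with very few non-zero entries, so the characteristic polynomial of the twisted Frobenius can be computed in closed form and its Newton polygon read off directly. A secondary issue is the gluing of the successive elementary deformations into a single one-parameter family: one handles this by choosing the curve in the multi-parameter deformation generically enough to meet the top Newton stratum of the deformation space, whose codimension is predicted to be exactly $n$ by the length formula of Theorem~\ref{thmchai}, matching the number of parameters one has introduced.
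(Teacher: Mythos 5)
The paper itself does not prove Grothendieck's conjecture---it is cited as a theorem of Oort---but the surrounding remark describing Oort's two-step strategy makes clear where your proposal goes wrong, and it is at exactly the point where the real work lies.

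The critical gap is your opening reduction. You claim that because quasi-isogenies lift rigidly (Drinfeld, Rapoport--Zink), a deformation of the minimal model $H(\nu)_k$ over $k[[t]]$ ``automatically produces a deformation of $\mathbb{X}_0$ with the same generic Newton polygon.'' This is false. Rigidity lets you lift a quasi-isogeny between two \emph{already-given} $p$-divisible groups over a nilpotent thickening; it does not manufacture a deformation of $\mathbb{X}_0$ out of a deformation of $H(\nu)$. Concretely, if $\psi\colon H(\nu)\to\mathbb{X}_0$ is an isogeny with kernel $K$, turning a deformation $\mathcal{H}$ of $H(\nu)$ into a deformation of $\mathbb{X}_0$ would require lifting $K$ to a finite flat subgroup scheme of $\mathcal{H}$, and finite flat subgroup schemes do not lift in general (think of $\alpha_p$ inside the $p$-torsion of a supersingular elliptic curve versus its ordinary deformations). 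In the Rapoport--Zink formalism the obstruction is equally visible: the quasi-isogeny gives an isomorphism of formal schemes $\mathcal{M}_{H(\nu)}\cong\mathcal{M}_{\mathbb{X}_0}$, but it moves the base point, so the formal neighbourhood of $(H(\nu),\mathrm{id})$ is carried to a neighbourhood of $(H(\nu),\phi)$ inside $\mathcal{M}_{\mathbb{X}_0}$, not to the deformation space of $\mathbb{X}_0$.

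This is precisely the hardest step in Oort's proof, as the paper's remark emphasizes: one must deform $\mathbb{X}_0$ \emph{within its own Newton stratum} to reach a $p$-divisible group with $a$-invariant $\leq 1$, and this reduction is the content of the joint work of de Jong and Oort---it uses the purity theorem for the Newton stratification, equidimensionality and the dimension formula for the relevant Rapoport--Zink space, and an induction on the number of isoclinic summands. Your proposal replaces this with an incorrect isogeny-transfer lemma, and so the remaining step (Oort's Cayley--Hamilton computation, your ``perturb Frobenius by $t\cdot u$'') would have to carry the entire weight of the theorem, which it cannot: it applies only to minimal $p$-divisible groups and says nothing about a general $\mathbb{X}_0$. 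The assembling of elementary perturbations via a ``sufficiently generic curve'' is also looser than what Oort actually does (he works directly over the full formal deformation ring and reads off the Newton strata), but that is secondary; the missing reduction to the $a\leq 1$ case is the genuine gap, and it is not a small one.
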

This conjecture has been shown by Oort in 2002, see Remark \ref{remoort}(1) below.

Using the bijection between deformations of an abelian variety in characteristic $p$ and deformations of its $p$-divisible group, (as well as liftability properties in the spirit of Theorem \ref{thmintnonempty}) one can translate this into a statement on the closures of Newton strata in a suitable Shimura variety. The natural generalization to Shimura varieties of PEL type is the following theorem.
\begin{thm}[Hamacher]\label{thmgroth}
Let $\S_K(G,X)$ be a Shimura variety of PEL type with good reduction at $p$. Let $[b]\in B(G,\mu)$ where $\mu$ is the cocharacter associated with the Shimura datum, and let $\nu=\nu([b])$. Then
$$\overline{\N_{\nu}}=\bigcup_{\nu'\preceq \nu} \N_{\nu'}.$$ 
\end{thm}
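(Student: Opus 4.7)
My plan is to separate the two inclusions. The containment $\overline{\N_{\nu}}\subseteq\bigcup_{\nu'\preceq\nu}\N_{\nu'}$ is immediate from the Grothendieck--Rapoport--Richartz specialization theorem (Theorem \ref{thmgrothspec}) applied to the $F$-isocrystal with $G$-structure attached to the universal abelian scheme on $\S_K(G,X)$. The substantive part is the reverse inclusion: for each $\nu'\preceq\nu$ in $\N(G)_{\preceq\nu}$ with $\N_{\nu'}\neq\emptyset$ and each point $x\in\N_{\nu'}(k)$, one must produce a formal arc through $x$ whose generic point lies in $\N_{\nu}$, since this forces $x\in\overline{\N_{\nu}}$.

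I would first reduce to the case of an elementary covering $\nu'\lessdot\nu$, i.e.\ $\l([\nu',\nu])=1$. Since $\N(G)_{\preceq\nu}$ is finite and ranked by Theorem \ref{thmchai}, one may pick a maximal chain $\nu'=\nu_0\lessdot\nu_1\lessdot\cdots\lessdot\nu_r=\nu$, and it suffices to show $\N_{\nu_i}\subseteq\overline{\N_{\nu_{i+1}}}$ for each $i$; the desired inclusion then follows by iteration $\N_{\nu'}\subseteq\overline{\N_{\nu_1}}\subseteq\cdots\subseteq\overline{\N_{\nu}}$. (Non-emptiness of the intermediate $\N_{\nu_i}$ is guaranteed by Theorem \ref{thmnonempty}.)

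The elementary step is local. By the construction of the integral model in \cite{KisinIM}, the Serre--Tate theorem, and the uniqueness of flat deformations of Hodge tensors along formal deformations of a $p$-divisible group, the formal neighborhood $\widehat{\S}_x$ is identified with the universal deformation space of $\G_x$ endowed with its PEL structure. The task becomes to exhibit, inside this deformation space, a formal curve through $x$ whose generic fibre has Newton polygon one step above $\nu'$. The key input I would invoke is a PEL version of Oort's resolution of Grothendieck's conjecture (Remark \ref{remoort}(1)): given a $p$-divisible group with PEL structure of Newton polygon $\nu'$ and Hodge type bounded by $\mu$, every $\nu\succ\nu'$ with $\nu\preceq\mu$ in $B(G,\mu)$ is realized generically in a deformation respecting the PEL data. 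To obtain this PEL analogue I would use the almost product structure of Newton strata (Oort, Mantovan), which up to finite covers realizes $\N_{\nu'}$ locally as a product of a central leaf and an isogeny leaf isomorphic to a Rapoport--Zink space; since any variation of the Newton polygon occurs in the Rapoport--Zink factor, whose points are described group-theoretically via affine Deligne--Lusztig varieties as in Theorem \ref{thmne}, Oort's minimal-deformation construction from the Siegel case can be transported into that factor with the PEL structure built in, producing the required one-parameter family.

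The main technical obstacle will be ensuring that Oort's constructions of deformations of bare $p$-divisible groups lift compatibly with the endomorphism and polarization data at $\G_x$. The Rapoport--Zink and central-leaf decomposition is precisely what isolates the Newton-polygon variation in a factor intrinsically adapted to the PEL structure; without this packaging it would be unclear whether Oort's specific deformations can be chosen to respect the Hodge tensors. All of the other ingredients (induction on the chain length of Theorem \ref{thmchai}, specialization, and Serre--Tate) are comparatively soft, and the theorem should follow once this local existence of structured deformations is secured.
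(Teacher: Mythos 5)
Your reduction to the easy specialization inclusion and the ambition to work in formal neighborhoods are fine, but the engine of your argument contains a fundamental error. You write that ``any variation of the Newton polygon occurs in the Rapoport--Zink factor'' of the almost-product structure on $\N_{\nu'}$. This is false: the Newton polygon is an isogeny invariant, so the Rapoport--Zink space attached to $\G_x$ (equivalently, the affine Deligne--Lusztig variety $X_\mu(b)$ for the fixed $[b]$) parametrizes $p$-divisible groups with \emph{constant} Newton polygon $\nu'$. The almost-product structure is a decomposition of the single stratum $\N_{\nu'}$, not of the deformation space of $\G_x$; both the central-leaf and isogeny-leaf factors lie entirely inside $\N_{\nu'}$, and neither sees the transverse direction along which the Newton polygon jumps. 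There is therefore no factor in which to run an Oort-style deformation to $\nu\succ\nu'$, and the heart of your argument collapses. Moreover, even setting aside this confusion, the paper explicitly flags that the Oort route does not transport to the PEL setting: Oort's deformation argument relies on density of the locus where the $a$-invariant is $\leq 1$, and in the PEL case this locus need not be dense and can be empty (Remark~\ref{remoort} and the surrounding discussion). So the ``main technical obstacle'' you identify is not merely technical --- it is an obstruction the paper circumvents by an entirely different route.

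The actual proof in the paper avoids the $a$-invariant altogether. It uses the almost-product structure only to \emph{compute dimensions}: $\dim\N_{\nu}=\dim X_\mu(b)+\dim C_{\underline{\mathbb{X}}}$, combined with the dimension formulas for affine Deligne--Lusztig varieties and for central leaves, yields $\dim\N_\nu=\dim\S_K(G,X)-\l([\nu,\mu])$, hence every irreducible component of $\N_\nu$ has codimension at least $\l([\nu,\mu])$. The closure relations are then extracted from this codimension bound by a purity argument: the Newton stratification satisfies \emph{strong purity} (Theorem~\ref{thmpurity}), and the combinatorial Lemma~\ref{lemtranslate} converts strong purity plus the codimension lower bound for the smallest non-empty stratum into (i) the closure relation $S_{[b_0]}\subset\overline{S_{[b']}}$ for all intermediate $[b']$, (ii) non-emptiness of those strata, and (iii) the exact codimension. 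This dimension-plus-purity strategy is what replaces the deformation-theoretic step you were hoping to borrow from Oort.
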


\begin{remark}\label{remoort}
Before Hamacher's proof, a number of partial results were already known.
\begin{enumerate}
\item Grothendieck's original conjecture corresponds to the case that $G_{\mathbb{Q}_p}\cong \GL_h$. This has been shown by Oort \cite{Oort1}. There, Oort also proved Theorem \ref{thmgroth} for the Siegel modular variety (i.e. for $G=\GSp_{2n}$), or equivalently for deformations of principally polarized $p$-divisible groups.
\item Wedhorn used explicit deformations to prove in \cite{twthesis} that the $\mu$-ordinary Newton stratum (i.e.~the one corresponding to the unique maximal element of $B(G,\mu)$ with respect to $\preceq$) of a Shimura variety of PEL type with good reduction is dense. This was generalized to Shimura varieties of Hodge type by Wortmann \cite{Wortmann}, using Ekedahl-Oort strata.
\item The general statement of Theorem \ref{thmgroth} (together with weaker forms) was asked for by Rapoport in \cite{RapoportGuide}, Questions 7.3.
\item In a small number of other special cases, this statement was known due to explicit calculations, for example for the Shimura variety studied by Harris and Taylor in their proof of the local Langlands correspondence for $\GL_n$.
\end{enumerate}
\end{remark}

\begin{remark}
Parts of Oort's proof of the original Grothendieck conjecture have also been generalized, however, in general this has not led to a proof of Theorem \ref{thmgroth}. Let us recall briefly the two main steps in Oort's approach. The first step is to show that one can deform a given $p$-divisible group to a $p$-divisible group with the same Newton point, but $a$-invariant $\leq 1$. Here, the $a$-invariant of a $p$-divisible group $\mathbb{X}$ over a field $k$ is defined as $\dim_k\Hom(\alpha_p,\mathbb{X})$. The main difficulty is here to consider the case where the isocrystal of the $p$-divisible group is simple (and then to use induction on the number of simple summands for the general case). Main ingredients in this step of the proof (in joint work of de Jong and Oort, \cite{JO}) are de Jong and Oort's purity theorem for the Newton stratification (a weak form of Theorem \ref{thmpurity} below), and combinatorial arguments on some stratification of the Rapoport-Zink moduli space of $p$-divisible groups isogenous to $\mathbb{X}$. A key intermediate result is that the Rapoport-Zink moduli space is equi-dimensional, and to compute its dimension. 

In a second step Oort uses a Cayley-Hamilton type argument to explicitly compute the Newton polygon of deformations of $p$-divisible groups of $a$-invariant 1 to conclude. This second part of the argument has been generalized by Yu \cite{Yu}.

However, it seems to be hard to extend this to a proof of the whole statement, simply because there is (for this particular purpose at least) no good analogue of the $a$-invariant on moduli spaces of $p$-divisible groups with additional structure. For general Newton strata on Shimura varieties of PEL type the locus where the $a$-invariant is at most $1$ is in general no longer dense, it can even be empty. 
\end{remark}


\subsection{An analog for function fields}
The proof of Theorem \ref{thmgroth} is modelled along the lines of an analogous statement for the function field case, i.e. for Newton strata in loop groups \cite{grothconj}. 

To formulate this statement let $G$ be a split connected reductive group over $\mathbb{F}_q$, and let $B\supseteq T$ be a Borel subgroup and a split maximal torus of $G$. We denote by $LG$ the loop group of $G$, i.e.~the ind-scheme over $\mathbb{F}_q$ representing the sheaf of groups for the fpqc-topology whose sections for an $\mathbb{F}_q$-algebra $R$ are given by $LG(R)=G(R\dl z\dr )$, see \cite{Faltings}, Definition 1. Let $LG^+$ be the sub-group scheme of $LG$ with $LG^+(R)=G(R[[z]])$. 

Let $S$ be a scheme and $Y\in LG(S)$. Recall from Remark \ref{remspec}(3) that for $[b]\in B(G)$  the locus $\mathcal{N}_{[b]}\subseteq S$ where $Y$ is in $[b]$ defines a locally closed reduced subscheme of $S$. 

On the set of dominant coweights $X_*(T)_{\dom}$ we consider the partial ordering induced by the Bruhat order, i.e.~$\mu\preceq\mu'$ if and only if $\mu'-\mu$ is a non-negative integral linear combination of positive coroots. Note that this is slightly finer than the ordering induced by $\preceq$ on $X_*(T)_{\Q,\dom}$, as we do not allow rational linear combinations.

\begin{thm}\label{thm2}
Let $\mu_1\preceq\mu_2\in X_*(T)$ be dominant coweights. Let $$S_{\mu_1,\mu_2}=\bigcup_{\mu_1\preceq\mu'\preceq \mu_2} LG^+\mu'(z)LG^+.$$ Let $[b]\in B(G,\mu_2)$. Then the Newton stratum $\mathcal{N}_{[b]}=[b]\cap S_{\mu_1,\mu_2}$ is non-empty and pure of codimension $$\l([\nu([b]),\mu_2])=\langle \rho,\mu_2-\nu([b])\rangle+\frac{1}{2}\defect(b)$$ in $S_{\mu_1,\mu_2}$. The closure of $\mathcal{N}_{[b]}$ in $S_{\mu_1,\mu_2}$ is the union of all $\mathcal{N}_{[b']}$ for $[b']\preceq [b]$.
\end{thm}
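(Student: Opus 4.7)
The plan is to establish the three assertions of the theorem (non-emptiness, the codimension formula together with purity, and the closure relations) in succession, with the codimension calculation at the technical heart and resting on the dimension theory of affine Deligne-Lusztig varieties. Non-emptiness is immediate from the function field analog of Theorem \ref{thmne}: it provides $x\in LG^+\mu_2(z)LG^+$ with $[x]=[b]$, and since $\mu_2\in[\mu_1,\mu_2]$ this $x$ witnesses $\N_{[b]}\cap S_{\mu_1,\mu_2}\neq\emptyset$.

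For the codimension formula, the key object is the affine Deligne-Lusztig variety
$$X_{\mu'}(b)=\{gLG^+\in LG/LG^+\mid g^{-1}b\sigma(g)\in LG^+\mu'(z)LG^+\},$$
for $\mu_1\preceq\mu'\preceq\mu_2$. The intersection $\N_{[b]}\cap LG^+\mu'(z)LG^+$ is the image of $Y_{\mu'}(b):=\{g\in LG\mid g^{-1}b\sigma(g)\in LG^+\mu'(z)LG^+\}$ under $g\mapsto g^{-1}b\sigma(g)$, whose fibers are the orbits of the left $J_b$-action, while the quotient $Y_{\mu'}(b)\to X_{\mu'}(b)$ is an $LG^+$-bundle on the right. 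Plugging in the dimension formula
$$\dim X_{\mu'}(b)=\langle\rho,\mu'-\nu([b])\rangle-\frac{1}{2}\defect(b)$$
of Hartl-Viehmann for split groups, together with the Schubert cell dimension $\langle 2\rho,\mu'\rangle$ and carefully tracking the cancellation of the infinite-dimensional contributions from $LG^+$ and $J_b$, one arrives at the codimension $\langle\rho,\mu'-\nu([b])\rangle+\frac{1}{2}\defect(b)$ of $\N_{[b]}\cap LG^+\mu'(z)LG^+$ inside $LG^+\mu'(z)LG^+$. This is minimized at $\mu'=\mu_2$ over the allowed range, so the top-dimensional components of $\N_{[b]}$ lie in the $\mu_2$-cell, and the codimension in $S_{\mu_1,\mu_2}$ equals $\langle\rho,\mu_2-\nu([b])\rangle+\frac{1}{2}\defect(b)=\l([\nu([b]),\mu_2])$ by Theorem \ref{thmchai}(3). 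Purity of $\N_{[b]}$ then follows from the loop-group analog of de Jong-Oort's purity theorem for the Newton stratification.

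For the closure relations, one direction is Grothendieck's specialization (Theorem \ref{thmgrothspec}), giving $\overline{\N_{[b]}}\subseteq\bigcup_{[b']\preceq[b]}\N_{[b']}$. The reverse inclusion I would establish by induction on the chain-length $\l([\nu([b']),\nu([b])])$ in the ranked Newton poset. For the base case of an immediate predecessor $[b']\prec[b]$, the codimension formula forces $\N_{[b']}$ to have codimension exactly one greater than $\N_{[b]}$ in $S_{\mu_1,\mu_2}$; combining this with purity and an explicit deformation (lifting a chosen point of $\N_{[b']}$ to a one-parameter family inside the $\mu_2$-Schubert cell whose generic fiber lies in $\N_{[b]}$) places $\N_{[b']}$ in $\overline{\N_{[b]}}$. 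Iterating along a maximal chain, whose length is given by Theorem \ref{thmchai}(2), yields the full statement.

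The main technical obstacle is the codimension calculation: both the ADLV dimension formula and the bookkeeping of the two compatible torsor/bundle structures relating $X_{\mu'}(b)$ to $\N_{[b]}\cap LG^+\mu'(z)LG^+$ are delicate, as one must track cancellations of infinite-dimensional contributions in $LG^+$ and $J_b$. A secondary obstacle is the base-case deformation in the closure induction, which is the group-theoretic counterpart of Oort's Cayley-Hamilton argument in the original Grothendieck conjecture and requires nontrivial construction beyond the dimension and purity inputs.
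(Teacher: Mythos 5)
Your non-emptiness argument via the converse to Mazur's inequality (Theorem \ref{thmne}) is valid and is in fact simpler than what the paper's Lemma \ref{lemtranslate} produces, and your codimension calculation based on the ADLV dimension formula and Schubert cell dimensions is the right computation; this part matches the paper's strategy. The trouble is with the closure relations and with what ``pure'' means.

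First, your proposed base-case argument for the closure relations --- lifting a chosen point of $\N_{[b']}$ along an explicit one-parameter family in the $\mu_2$-Schubert cell whose generic fiber lies in $\N_{[b]}$ --- is precisely the step the paper explicitly rules out. This would be the group-theoretic analog of Oort's Cayley--Hamilton argument, but the paper's Remark after Theorem \ref{thmgroth} points out that the crucial tool there, the $a$-invariant, has no adequate substitute: the locus of $a$-invariant $\leq 1$ is in general not even dense, and no general construction of such deformations exists. You acknowledge this as an ``obstacle,'' but it is not a gap one can fill by ``nontrivial construction''; the paper's entire point is to avoid it. The actual argument replaces the explicit deformation by the \emph{strong} purity property (Theorem \ref{thmpurity}, a theorem of Yang-type in this context) together with the inductive Lemma \ref{lemtranslate}: given strong purity, a lower bound on the codimension of a single non-empty stratum forces simultaneously the closure relations, non-emptiness of all intermediate strata, and the exact codimensions. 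Second, your purity claim is conflated: the de Jong--Oort / Hartl--Viehmann result is \emph{weak} purity (affineness, or codimension-one complement, of the open stratum inside a closed one), which by itself does not give equidimensionality of $\N_{[b]}$, and is also not strong enough for the inductive step. Equidimensionality is a \emph{consequence} of Lemma \ref{lemtranslate} applied with strong purity, not an input you can read off from the ADLV dimension computation, which only gives a lower bound on codimensions of components.
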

Here the defect $\defect(b)$ is as in Definition \ref{defdef}  defined as $\rk ~G - \rk_{\mathbb{F}_q\dl z\dr}J_b$.

Note that one still needs to define the notions of codimension and of the closure of the infinite-dimensional schemes $\mathcal{N}_b$. Both of these definitions use that there is an open subgroup $H$ of $LG^+$ such that the Newton point of an element of $S_{\mu_1,\mu_2}$ only depends on its image in the finite-dimensional scheme $S_{\mu_1,\mu_2}/H$, compare \cite{grothconj}, 4.3.

\begin{remark}
Theorem \ref{thm2} is not precisely the analog of Theorem \ref{thmgroth}:
\begin{enumerate}
\item In this context we can consider the Newton stratification on arbitrary (unions of) double cosets. The double coset is the replacement of the Hodge polygon in the Shimura variety context. There, one is limited to the case of minuscule $\mu$. 
\item For the closure relations we could also consider the Newton stratification on the whole loop group, but this yields a much weaker statement: There it is very easy to see that the closure of a stratum is a union of strata (just because the $\sigma$-conjugation action of $G(L)$ is transitive on each Newton stratum), and it consists of the conjectured strata because of \cite{vw}, Cor. 1.9.
There is no good analogue for the whole loop group of the statement about codimensions.
\item Theorem \ref{thm2} is only proven for split reductive groups. The reason is that at the time, the dimension formula and theory of local $G$-shtukas that is used in the proof was not yet developed in greater generality. We expect that essentially the same proof shows the theorem for all unramified groups. (Which is also justified by the fact that the analog for Shimura varieties in Theorem \ref{thmgroth} is shown in that generality.) On the other hand, the assumption that $K_p$ is a hyperspecial maximal compact subgroup cannot easily be removed.
\item The direct analog of Theorem \ref{thmgroth} in the function field context would be a statement about moduli spaces of shtukas with additional structure. The present statement is rather the generalized analog of Grothendieck's conjecture in the sense that it considers deformations of the local data. In the same way as for abelian varieties and $p$-divisible groups it should be possible to use the correspondence between deformations of global shtukas and the corresponding local shtukas to translate Theorem \ref{thm2} into a statement about moduli spaces of global shtukas with additional structure.
\end{enumerate}
\end{remark}

\subsection{Outline of the proof and dimensions of Newton strata}
We will spend the rest of this section to outline some of the joint main ingredients of the proofs of Theorems \ref{thmgroth} and \ref{thm2}. For simplicity we formulate statements only in the context of Shimura varieties. For the corresponding statements in the function field context compare \cite{grothconj}. The strategy of proof differs from Oort's proof of the Grothendieck conjecture in the sense that it avoids the use of the $a$-invariant. Instead we use a stronger version of the purity theorem to derive the closure relations directly from the computation of dimensions of suitable moduli spaces.

The proof of Theorem \ref{thmgroth} at the same time proves a formula for the dimensions of Newton strata which is interesting in its own right. It is the counterpart of the statement about codimensions in Theorem \ref{thm2}.

\begin{thm}[Hamacher]\label{thmdim}
Let $\S_K(G,X)$ be a Shimura variety of PEL type, and let $[b]\in B(G,\mu)$ be in the associated set of $\sigma$-conjugacy classes. Then the Newton stratum $\N_{[b]}$ in  $\S_K(G,X)$ is equidimensional of dimension $$\langle \rho, \mu+\nu([b])\rangle-\frac{1}{2}\defect(b).$$
\end{thm}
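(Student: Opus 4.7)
The plan is to compute $\dim \N_{[b]}$ by decomposing the Newton stratum into two more tractable pieces via an \emph{almost-product structure} in the spirit of the Oort--Mantovan foliation theorem. Fix a point $x \in \N_{[b]}(k)$ with associated $p$-divisible group with PEL structure $\G_x$. Inside $\N_{[b]}$ one has the \emph{central leaf} $C_x$ through $x$, consisting of those points whose associated $p$-divisible group with additional structure is geometrically isomorphic (not merely isogenous) to $\G_x$. Mantovan's theorem, generalizing Oort's foliation in the Siegel case to Shimura varieties of PEL type, asserts that after a finite surjective alteration $\N_{[b]}$ becomes étale-locally a product of $C_x$ with the reduced underlying scheme $\mathcal{M}^{\red}_{[b]}$ of the Rapoport--Zink space of quasi-isogenies from $\G_x$. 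In particular,
$$\dim \N_{[b]} = \dim C_x + \dim \mathcal{M}^{\red}_{[b]}.$$

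The two factor dimensions are then read off from results already available. First,
$$\dim C_x = \langle 2\rho, \nu([b])\rangle,$$
originally due to Oort for the Siegel variety and extended to the PEL setting by Mantovan; the key input is that $C_x$ is built from the Newton slope filtration of $\G_x$, whose jump dimensions are encoded in $\nu([b])$. Second, by Hamacher's dimension formula for affine Deligne--Lusztig varieties for unramified groups (generalizing earlier results of the author and answering the G\"ortz--Haines--Kottwitz--Reuman conjecture),
$$\dim \mathcal{M}^{\red}_{[b]} = \langle \rho, \mu - \nu([b])\rangle - \tfrac{1}{2}\defect(b).$$
Adding these gives
$$\dim \N_{[b]} = \langle 2\rho, \nu([b])\rangle + \langle \rho, \mu - \nu([b])\rangle - \tfrac{1}{2}\defect(b) = \langle \rho, \mu + \nu([b])\rangle - \tfrac{1}{2}\defect(b),$$
which is the claimed formula, and is also compatible with Theorem \ref{thm2}, since the codimension adds up to $\langle 2\rho,\mu\rangle = \dim \S_K(G,X)$.

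Equidimensionality comes from combining three facts: the strong form of the purity theorem for the Newton stratification (a strengthening of de Jong--Oort, valid for $F$-isocrystals with $G$-structure, which is also what drives the proof of Theorem \ref{thmgroth}); smoothness and equidimensionality of $C_x$ (Oort, Mantovan); and equidimensionality of $\mathcal{M}^{\red}_{[b]}$ (Hamacher--Viehmann). Since the almost-product decomposition is étale-locally a product, these transport to equidimensionality of $\N_{[b]}$, independently of the chosen base point. One also uses that $C_x$ is non-empty through every $x$ of $\N_{[b]}$ --- itself a consequence of Theorem \ref{thmnonempty} together with a Serre--Tate type deformation argument.

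The main obstacle will be to set up the almost-product decomposition with enough precision in the general PEL setting, not merely the Siegel case: one must keep track of the polarization and the endomorphism action simultaneously, and verify that each geometric point of $\N_{[b]}$ lies on a leaf with the expected slope filtration. A secondary difficulty is that the dimension formula for $\mathcal{M}^{\red}_{[b]}$ involves the defect $\defect(b)$, so one needs to match the group-theoretic defect coming from $J_b$ with the geometric defect of the Rapoport--Zink tower; this matching is exactly where the $\tfrac{1}{2}\defect(b)$ correction term (and the switch from $\langle\rho,\mu\rangle$ to a shifted formula) enters, and is ultimately the same combinatorial input that governs the chain-length formula of Theorem \ref{thmchai}.
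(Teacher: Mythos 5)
Your proposal matches the paper's proof: both use the almost-product (foliation) decomposition $\dim \N_{[b]} = \dim X_\mu(b) + \dim C_{\underline{\mathbb X}}$ together with the affine Deligne--Lusztig dimension formula $\langle\rho,\mu-\nu\rangle-\tfrac12\defect(b)$ and the central-leaf formula $\langle 2\rho,\nu\rangle$. For equidimensionality the paper is a bit more careful than your "transport through the almost product": it instead uses the almost-product formula only to bound the codimension of every irreducible component from below by $\l([\nu,\mu])$, and then invokes strong purity via Lemma \ref{lemtranslate}, whose part (3) forces every irreducible component to achieve that codimension exactly.
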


The translation between statements about (co)dimensions and statements about closure relations is done using purity properties. The following definition and Lemma \ref{lemtranslate} are an abstract version of the key idea behind the proofs of \ref{thm2}, \ref{thmgroth} and \ref{thmdim}.
\begin{definition}
Let $G$ be a reductive group over $\F_q$, $B$ a Borel subgroup, $T\subset B$ a maximal torus, and let $\mu\in X_*(T)_{\dom}$ be given.
Let $S$ be a scheme, and for every $[b]\in B(G,\mu)$ let $S_{[b]}$ be a locally closed subscheme such that $S$ is the disjoint union of the $S_{[b]}$ and for every $[b]$, the subscheme $\bigcup_{[b']\preceq[b]}S_{[b']}$ is closed. Then we say that this decomposition of $S$ satisfies strong purity if the following condition holds. Let $[b]\in B(G,\mu)$ such that $S_{[b]}\neq\emptyset$. Let $[b']\preceq [b]$ such that $\l([\nu[b'],\nu( [b])])=1$. Let $I$ be the closure in $S$ of an irreducible component of $S_{[b]}$. Let $I^{[b']}$ be the complement in $I$ of $I\cap \bigcup_{[b'']\preceq[b' ]}S_{[b'']}$. Then for every such $I$ and $[b']$ we have that $I^{[b']}$ is an affine $I$-scheme.

The decomposition satisfies weak purity if for every $I$ as above, $I\cap \N_{[b]}$ is an affine $I$-scheme.
\end{definition}
\begin{ex}
For the group $\GL_n$ we have the following explicit description: Let $\nu,\nu'$ be the Newton points of $[b]$ and $[b']$ with $[b],[b']$ as in the definition above. Then the integral points lying on or below $p_{\nu}$ are the same as those for $p_{\nu'}$ except for one breakpoint $y$ of $\nu$ which does no longer lie on $\nu'$. The subscheme $I^{[b']}$ is now the union of all Newton strata of $I$ where the Newton polygon (which automatically lies below $\nu$) contains the given point $y$.
\end{ex}

\begin{thm}\label{thmpurity}
The Newton stratification on the reduction modulo $p$ of PEL Shimura varieties satisfies strong purity.
\end{thm}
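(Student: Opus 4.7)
My plan is to upgrade the classical de Jong--Oort purity theorem to its strong form via Serre--Tate localization combined with an explicit Hasse-type construction. The argument has three steps: local reduction, codimension-one purity, and affineness via ample Cartier divisors.

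First I would reduce to a formal-local statement. For each closed point $x\in S_{[b]}$, the Serre--Tate theorem (in its PEL variant) identifies the formal completion of $\mathcal{S}_K(G,X)$ at $x$ with the universal deformation space of the $p$-divisible group $\mathcal{G}_x$ together with its PEL structure, and the Newton stratifications match on the two sides. It therefore suffices to establish the affineness of $I^{[b']}\to I$ after formal completion at each closed point. I would then invoke the classical purity theorem of de Jong--Oort and its PEL generalization: for any non-isotrivial family of $p$-divisible groups with PEL structure, the non-generic Newton polygon locus is purely of codimension $1$ in the closure of the generic stratum and is locally cut out by a single equation. Since $[b']\preceq[b]$ is adjacent (chain-length one in $\N(G)_{\preceq\nu}$), this shows that $I\cap\bigcup_{[b'']\preceq[b']}S_{[b'']}$ is a Cartier divisor in $I$.

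To promote this Cartier divisor to an affine complement, I would construct a generalized Hasse invariant on $I$: a section $s$ of a relatively ample line bundle whose vanishing scheme is exactly $I\cap\bigcup_{[b'']\preceq[b']}S_{[b'']}$. This is in the spirit of Koskivirta--Wedhorn and Goldring--Koskivirta, who produced such sections for Ekedahl--Oort strata using the Hodge bundle and the $G$-zip machinery; combined with the compatibility between Ekedahl--Oort and Newton stratifications from \cite{Nie} and \cite{vw}, one can upgrade their construction to target the Newton-adjacent locus. Since the non-vanishing locus of a section of an ample line bundle is affine over the base, $I^{[b']}\to I$ is then affine.

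The step I expect to be the main obstacle is this last construction: one must verify that the Hasse section cuts out \emph{exactly} the adjacent Newton stratum closure (with no spurious components or higher multiplicities) and that its line bundle is sufficiently positive on the possibly non-smooth, non-proper component $I$. I would address this by working locally in the Oort--Mantovan almost product decomposition of $S_{[b]}$ as (central leaf) $\times$ (truncated Rapoport--Zink space), where the analogous affineness reduces to purity properties of affine Deligne--Lusztig varieties, and then gluing back up; checking that this gluing yields a globally ample line bundle on $I$ is the delicate point.
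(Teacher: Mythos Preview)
The paper is a survey and does not prove this theorem in detail; the Remark following it attributes the result to Hamacher \cite{Hamacher_Newt}, who implements Yang's insight \cite{yang} that the de Jong--Oort crystalline construction can be refined to test whether the Newton polygon passes through a \emph{fixed} integral breakpoint, not merely whether it is generic. Your proposal departs from this route at Step~3, and the departure is a genuine gap. The Goldring--Koskivirta sections cut out closures of \emph{Ekedahl--Oort} strata, not Newton strata, and the compatibility statements in \cite{Nie} and \cite{vw} run in the wrong direction for your purpose: they place a single EO stratum inside each Newton stratum (which is what non-emptiness arguments need), but they do not express the closed set $I\cap\bigcup_{[b'']\preceq[b']}S_{[b'']}$ as a union of EO closures in any way that would let you combine EO Hasse invariants into a single section vanishing exactly there. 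Newton strata are unions of several EO strata whose closures interlock, and there is no mechanism in the $G$-zip formalism for producing a section that detects a Newton-adjacency condition. Your fallback via the almost-product structure does not help either: that decomposition lives on $\N_{[b]}$, not on its closure $I$, so it says nothing about how $I\setminus I^{[b']}$ sits inside $I$.

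What actually works is a direct crystalline construction with no detour through EO strata. For $\GL_n$, the condition that the Newton polygon of the crystal $M$ pass through the lattice point $(j,h)$ translates into a slope condition on $\wedge^j M$; running the de Jong--Oort/Vasiu argument on this auxiliary crystal produces a locally principal equation for the complement of $I^{[b']}$ in $I$, which is exactly what affineness of the open immersion requires. Hamacher carries this over to the PEL setting by replacing $\wedge^j$ with the representation of $G$ attached to the relevant sum $\underline\omega_j$ of fundamental weights. A secondary issue in your outline: affineness of an open immersion is Zariski-local on the target but not formal-local, so Step~1 as written does not reduce the problem; what Serre--Tate genuinely buys is only that the crystalline construction of the cutting equation is a statement about $p$-divisible groups, independent of the ambient Shimura variety.
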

\begin{remark}
The first version of a theorem along these lines was shown by de Jong and Oort \cite{JO}, who showed that the Newton stratification for $p$-divisible groups without additional structure satisfies a weaker version of weak purity where instead of affineness we require that the complement of $I\cap \N_{[b]}$ in $I$ is either empty or pure of codimension 1. Vasiu \cite{Vasiu1} showed that this same stratification also satisfies weak purity as defined above. Finally, Hartl and the author proved a group-theoretic version of weak purity in the function field case (i.e. for moduli of locl $G$-shtukas), see \cite{HV1}. 

Only several years later, Yang \cite{yang} introduced the concept of strong purity (under a different name) and observed that de Jong and Oort's statement can be generalized to prove (the same weakened version of) strong purity for the Newton stratification on families of $p$-divisible groups. After this idea, the same generalizations as for weak purity were made, in the function field case in \cite{grothconj}, for Shimura varieties of PEL type in \cite{Hamacher_Newt}.
\end{remark}

\begin{lemma}\label{lemtranslate}
Let $S$ be an irreducible scheme and let $S=\bigcup_{[b]\in B(G,\mu)}S_{[b]}$ be a decomposition satisfying strong purity. Let $[b_{\eta}]\in B(G,\mu)$ be the generic $\sigma$-conjugacy class. Let $[b_0]\in B(G,\mu)$ be such that $S_{[b_0]}$ is non-empty and that the codimension of every irreducible component of $S_{[b_0]}$ in $S$ is at least $\l([\nu([b_0]),\nu([b_{\eta}])])$. Then for every $[b']\in B(G,\mu)$ with $\nu([b_0])\preceq\nu([b'])\preceq \nu([b_{\eta}])$ we have
\begin{enumerate}
\item $S_{[b_0]}\subset \overline{S_{[b']}}.$
\item In particular, $S_{[b']}\neq \emptyset$.
\item Every irreducible component $I$ of $S_{[b_0]}$ is of codimension $\l([\nu([b_0]),\nu([b_{\eta}])])$ in $S$.
\end{enumerate}
\end{lemma}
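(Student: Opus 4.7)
The approach is an induction on the chain length $n := \l([\nu([b_0]),\nu([b_\eta])])$. For $n=0$ we have $\nu([b_0]) = \nu([b_\eta])$, and since $\kappa_G$ is constant on the connected $S$ (Remark \ref{remspec}(3)), the two classes share the same Kottwitz invariant too, forcing $[b_0] = [b_\eta]$ and making all three conclusions trivial.

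For the inductive step, fix an irreducible component $I_0$ of $\overline{S_{[b_0]}}$. We construct, by descending induction on $k$, a tower $I_0 \subsetneq I_1 \subsetneq \cdots \subsetneq I_n = S$ of irreducible closed subschemes with $I_k$ the closure in $S$ of an irreducible component of some stratum $S_{[c_k]}$, where $[c_0] = [b_0] \prec [c_1] \prec \cdots \prec [c_n] = [b_\eta]$ is a maximal chain in $B(G,\mu)$ and $\codim_S I_k = n-k$. We set $I_n := S$, using that the generic stratum $S_{[b_\eta]}$ is dense in the irreducible $S$. Given $I_k$, pick an immediate predecessor $[c_{k-1}]$ of $[c_k]$ with $\nu([b_0]) \preceq \nu([c_{k-1}])$; such a choice exists by the ranked property (Theorem \ref{thmchai}(2)). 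Strong purity applied to $I_k$ and $[c_{k-1}]$ shows that $I_k^{[c_{k-1}]}$ is affine over $I_k$, and since it contains the non-empty open $I_k \cap S_{[c_k]}$, it is dense. The standard fact that the complement of a dense affine open in a Noetherian scheme has pure codimension one then forces $I_k \cap \bigcup_{[b'']\preceq[c_{k-1}]} S_{[b'']}$ to be pure of codimension one in $I_k$. Since this closed locus contains $I_0$ (as $[b_0] \preceq [c_{k-1}]$), we let $I_{k-1}$ be its irreducible component through $I_0$, giving $\codim_S I_{k-1} = n-k+1$.

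The crucial remaining point is to verify that $I_{k-1}$ is the closure of an irreducible component of $S_{[c_{k-1}]}$ itself, not of some smaller stratum $S_{[d]}$ with $[d] \prec [c_{k-1}]$. Its generic point lies in $S_{[d]}$ for some $[d]$ with $[b_0] \preceq [d] \preceq [c_{k-1}]$. If $[d] \prec [c_{k-1}]$, we apply the overall inductive hypothesis to the irreducible $I_k$, whose induced decomposition inherits strong purity from $S$ (closures in $I_k$ of components of $I_k \cap S_{[b]}$ coincide with the relevant closures in $S$). The new chain length is $k < n$, so the hypothesis applies, and its part (3) forces every irreducible component of $\overline{S_{[d]}} \cap I_k$ through $I_0$ to have codimension $\l([\nu([d]),\nu([c_k])]) \geq 2$ in $I_k$, contradicting $\codim_{I_k} I_{k-1} = 1$. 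Hence $[d] = [c_{k-1}]$, and the tower is constructed. Conclusion (3) follows: the constructed bottom of the tower is an irreducible component of $\overline{S_{[b_0]}}$ containing our chosen $I_0$, hence equals it by maximality, so $\codim_S I_0 = n$. For (1) and (2), any $[b']$ with $\nu([b_0]) \preceq \nu([b']) \preceq \nu([b_\eta])$ lies on some maximal chain from $[b_0]$ to $[b_\eta]$ by the ranked property, and we arrange the choices of immediate predecessors so that the chain passes through $[b']$; this yields $I_0 \subset I_k \subset \overline{S_{[b']}}$ for the appropriate $k$, and varying $I_0$ over all components of $\overline{S_{[b_0]}}$ gives $S_{[b_0]} \subset \overline{S_{[b']}}$ and in particular $S_{[b']} \neq \emptyset$.

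The main obstacle is the interface between the formal combinatorial induction and two pieces of geometry: first, the passage from ``$I_k^{[c_{k-1}]}$ is affine over $I_k$'' to ``its closed complement in $I_k$ is pure of codimension one'', which needs a Hartshorne-type purity statement in the not-a-priori-smooth setting of $I_k$; and second, the inheritance of strong purity by $I_k$ with its induced decomposition, needed so that the overall induction on chain length can be invoked on $I_k$ in the identification $[d] = [c_{k-1}]$. Both steps are essentially formal once the geometric framework is pinned down, but they merit explicit verification.
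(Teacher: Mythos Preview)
Your strategy---produce a tower $S = I_n \supsetneq \cdots \supsetneq I_0$ by repeatedly invoking the codimension-$1$ consequence of affineness---is precisely the one the paper has in mind (it defers the combinatorics to \cite{grothconj} and \cite{Hamacher_Newt}, Prop.~5.13), and your base case and the mechanism of the descent are correct.

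The gap is in your ``crucial remaining point''. To rule out $[d]\prec[c_{k-1}]$ you invoke part (3) of the lemma, by induction on chain length, for the pair $(I_k,[d])$. But the lemma carries a hypothesis: every component of the bottom stratum must already satisfy the codimension lower bound. That bound is assumed only for $[b_0]$ in $S$; nothing provides it for an arbitrary intermediate class $[d]$ inside $I_k$. (Your phrase ``the new chain length is $k$'' rather points to the pair $(I_k,[b_0])$, but then part (3) speaks about components of the $[b_0]$-stratum, not the $[d]$-stratum, and no contradiction with $\codim_{I_k}I_{k-1}=1$ is produced.) So as written the inductive appeal is circular.

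The route taken in the references avoids this verification entirely. Build the tower greedily: let the generic stratum of $I_{k-1}$ be whatever $[d_{k-1}]\preceq[c_{k-1}]$ it turns out to be, and continue downward. After some number $m$ of steps you reach an irreducible closed subset containing $I_0$ whose generic point lies in $S_{[b_0]}$; maximality of $I_0$ forces this to equal $I_0$, so $\codim_S I_0 = m$. The strict chain $[b_\eta]=[d_0]\succ\cdots\succ[d_m]=[b_0]$ gives $m\le n$ by Theorem~\ref{thmchai}(2), while the hypothesis gives $\codim_S I_0\ge n$, hence $m\ge n$. Thus $m=n$, the chain is maximal, and \emph{a posteriori} each step went down by exactly one; in particular you may prescribe the chain to pass through any given $[b']$. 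Parts (1)--(3) follow as you outline. Your two flagged ``obstacles'' are genuine bookkeeping and are exactly what the paper's one-line citation absorbs; note in particular that in \cite{grothconj} and \cite{Hamacher_Newt} the affineness of $I^{[b']}$ is established for an arbitrary irreducible base, not only for closures of components of strata, which is what makes the greedy tower go through without the inheritance argument you worry about.
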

\begin{proof}
The proof is based on the fact that if $I^{[b']}$ is an affine $I$-scheme, then its complement in $I$ is either empty or pure of codimension 1. Using that, the lemma is shown in the same combinatorial way as the more explicit statements for Newton strata in loop groups \cite{grothconj} or Shimura varieties \cite{Hamacher_Newt}, Prop. 5.13.
\end{proof}

In view of Lemma \ref{lemtranslate}, Theorem \ref{thmpurity} implies that in order to prove Theorems \ref{thmgroth} and \ref{thmdim} it is enough to show that for every element in $B(G,\mu)$, the codimension of each irreducible component of the corresponding Newton stratum is at least $\l([\nu([b]),\mu])$.

From the almost product structure on Newton strata (see \cite{OortFol0} for the case of $G=\GL_n$ or $\GSp_{2n}$, \cite{MantThesis}, 4 for the general case, or \cite{Hamacher_Newt}, Prop. 6.4) we obtain for every Newton stratum $\N_{\nu}$ in $\S_K(G,X)$ with $\nu=\nu([b])$ associated with a $p$-divisible group with additional structure $\underline{\mathbb{X}}$ that
$$\dim~ \N_{\nu}=\dim~X_{\mu}(b)+\dim ~C_{\underline{\mathbb{X}}}.$$ 
Here, $C_{\underline{\mathbb{X}}}$ is the central leaf associated with $\underline{\mathbb{X}}$. The first summand is the dimension of the Rapoport-Zink space associated with $\underline{\mathbb{X}}$ or equivalently of the affine Deligne-Lusztig variety $X_{\mu}(b)$. Note that central leaves and the above decomposition are essentially a special case of the foliation structure on Newton strata using Igusa varieties explained in \cite{Mant}, 5.

For the two summands we have
\begin{thm}[\cite{GHKR},\cite{dimdlv},\cite{HamADLV}, Thm. 1.1]
Let $[b]\in B(G,\mu)$. Then $$\dim~ X_{\mu}(b)=\langle\rho,\mu-\nu\rangle-\frac{1}{2}\defect(b).$$
\end{thm}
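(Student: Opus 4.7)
The plan is to prove the equality by separately establishing matching upper and lower bounds on $\dim X_\mu(b)$, after first reducing to a superbasic situation in which explicit constructions are available.

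First I would reduce to the case where $(\mu,[b])$ is Hodge--Newton indecomposable. If there is a proper standard Levi $M\subset G$ such that $\mu$ factors through $M$ and $\nu$ is central in $M$, then Kottwitz's Hodge--Newton decomposition (generalized by Mantovan--Viehmann and Shen) identifies $X_\mu(b)$ with the affine Deligne--Lusztig variety $X_\mu^M(b)$ for $M$. Both sides of the desired formula transform compatibly under this reduction, since $\langle \rho_G-\rho_M,\mu-\nu\rangle$ vanishes (the difference $\mu-\nu$ lies in the coroot lattice of $M$) and $\defect_G(b)=\defect_M(b)$ because the centralizer $J_b$ is intrinsic to $b$. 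Iterating, we may assume $(\mu,[b])$ is Hodge--Newton indecomposable, and a further standard argument via inner Levi subgroups allows us to assume that $[b]$ is superbasic in $G$, i.e.~no $\sigma$-conjugate representative lies in a proper Levi subgroup.

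Next I would establish the upper bound $\dim X_\mu(b)\leq \langle \rho,\mu-\nu\rangle - \tfrac{1}{2}\defect(b)$, following G\"ortz--Haines--Kottwitz--Reuman \cite{GHKR}. The idea is to refine the Cartan stratification to the Iwahori decomposition $G(L) = \bigsqcup_{x\in\widetilde{W}}IxI$, restricted to $x$ in the $\mu$-admissible subset of $\widetilde{W}$: each intersection $X_\mu(b)\cap (IxI/G(\O_L))$ is a union of fine affine Deligne--Lusztig varieties whose dimensions are controlled by Kottwitz--Rapoport combinatorics on the affine flag variety. Summing over $x$ and maximizing yields the asserted upper bound, with the $-\tfrac{1}{2}\defect(b)$ term arising from the discrepancy between the absolute rank of $G$ and the $F$-rank of $J_b$ that enters these counts.

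For the matching lower bound I would treat the superbasic case by direct construction. For $G=\GL_n$, a superbasic class $[b]$ corresponds to an isocrystal with a single slope $d/n$, $\gcd(d,n)=1$, and Viehmann's lattice description in \cite{dimdlv} produces an explicit family of lattices $\Lambda$ satisfying the Deligne--Lusztig conditions that realizes an irreducible subvariety of $X_\mu(b)$ of the asserted dimension. Hamacher's extension in \cite{HamADLV} globalizes this to arbitrary unramified $G$ by combining the $\GL_n$ construction with the structure theory of parahoric subgroups and Bruhat--Tits buildings to produce enough $I$-orbits in every superbasic class to attain the upper bound.

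The main obstacle will be the superbasic lower bound: while the upper bound follows almost formally once a good stratification of $X_\mu(b)$ is in place, matching it requires delicate case-by-case constructions that must reproduce the precise defect contribution $\tfrac{1}{2}\defect(b)$. This is where the distinction between split and non-split unramified groups genuinely matters, and where the transfer from the $\GL_n$ prototype to general unramified $G$ is most technical.
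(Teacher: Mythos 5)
The paper does not prove this statement; it quotes it as a black-box input, citing \cite{GHKR}, \cite{dimdlv}, and \cite{HamADLV}, and remarks that Zhu \cite{Zhu} has since given an independent proof. So there is no internal argument to compare your sketch against, and the task is really to assess it against the cited literature.

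As such an assessment, the overall architecture you describe---reduction via the Hodge--Newton decomposition and then to a Levi in which $b$ is superbasic, a lower bound by explicit lattice constructions for $\GL_n$ in \cite{dimdlv} extended to unramified groups in \cite{HamADLV}, and an upper bound originating in \cite{GHKR}---is the right shape of the argument, and the compatibility checks you record for the Levi reduction, namely $\langle\rho_G-\rho_M,\mu-\nu\rangle=0$ (since $\mu-\nu$ lies in the coroot lattice of $M$) and $\defect_G(b)=\defect_M(b)$ (since $J_b$ depends only on $b$), are exactly the ones needed.

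Your upper-bound paragraph, however, conflates the two levels of the theory. The $\mu$-admissible set, Kottwitz--Rapoport combinatorics, and fine affine Deligne--Lusztig varieties are notions native to the Iwahori level and the affine flag variety, i.e.\ to \cite{GHKR2}, not to $X_\mu(b)\subset G(L)/G(\O_L)$ at hyperspecial level, which is what \cite{GHKR} and the present theorem concern. The hyperspecial-level upper bound is obtained by a different stratification argument, and the defect term is not extracted from admissibility combinatorics. You should also be careful that \cite{GHKR} establishes the formula only in restricted cases (split $G$, $b$ basic) and conjectures the rest; the theorem in the generality stated here is only completed by \cite{dimdlv} for $\GL_n$ and by \cite{HamADLV} for unramified $G$, so the three references are not interchangeable and a genuine proof outline should say explicitly which reference supplies which step.
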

Another proof of this result for unramified groups $G$ in the arithmetic context has recently been given by Zhu \cite{Zhu},3.
\begin{thm}[\cite{OortFol}, \cite{Hamacher_Newt}, Cor. 7.8]
Let $[b]\in B(G,\mu)$ be the class corresponding to $\underline{\mathbb{X}}$. Then $$\dim ~C_\mathbb{X}=\langle 2\rho,\nu([b])\rangle.$$
\end{thm}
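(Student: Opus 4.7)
The plan is to follow Oort's strategy from his foliations paper, adapted to the PEL setting by Hamacher. The central leaf $C_{\mathbb{X}}$ parametrizes points where the $p$-divisible group with additional structure is geometrically fiberwise isomorphic to $\underline{\mathbb{X}}$, so the dimension should be extracted from the slope filtration of $\mathbb{X}$ via deformation theory.

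First, I would reduce to the case where $\underline{\mathbb{X}}$ is completely slope divisible, using Zink's slope filtration theorem equivariantly for the PEL data. Since the central leaf dimension depends only on the isogeny class $[b]$, this reduction is harmless. It yields a canonical filtration $0=\mathbb{X}_0\subset \mathbb{X}_1\subset\dotsm\subset \mathbb{X}_r=\mathbb{X}$ whose graded pieces are isoclinic of strictly increasing slopes $\lambda_1<\dotsm<\lambda_r$, with the polarization inducing an anti-isomorphism between $\mathbb{X}_i$ and $\mathbb{X}/\mathbb{X}_{r-i}$.

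Next, I would analyze $C_{\mathbb{X}}$ in the formal neighborhood of the chosen point inside $\S_K(G,X)$. Via Grothendieck--Messing theory adapted to PEL data, $C_{\mathbb{X}}$ is the \emph{sustained} deformation locus: those infinitesimal deformations of $\underline{\mathbb{X}}$ extending the slope filtration to a filtration by $p$-divisible subgroups whose graded pieces remain geometrically constant up to isomorphism. Oort's sustained-deformation argument, in the PEL version, shows this locus is formally smooth with tangent space naturally identified via Dieudonn\'e theory with
$$T_{\underline{\mathbb{X}}}\, C_{\mathbb{X}}\;\cong\;\bigoplus_{i<j}\Hom\bigl(\mathbb{X}_j/\mathbb{X}_{j-1},\,\mathbb{X}_i/\mathbb{X}_{i-1}\bigr)^{\mathrm{PEL}},$$
where the Hom is taken in the category of $p$-divisible groups with the appropriate additional structure. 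Each summand has an explicit dimension in terms of the heights and slopes of the two isoclinic pieces. Summing the contributions, accounting for the fact that the polarization pairs up the summands indexed by $(i,j)$ and $(r-j,r-i)$, and applying the standard root-theoretic identity $\sum_{\alpha>0}\langle\alpha,\nu\rangle=\langle 2\rho,\nu\rangle$ for $\nu$ dominant, the total equals $\langle 2\rho,\nu([b])\rangle$.

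The main obstacle is the smoothness/tangent-space step: one must verify that in the presence of PEL data, the sustained-deformation locus is formally smooth and that its tangent space is \emph{exactly} the predicted sum, i.e.\ that the polarization and endomorphism constraints cut out precisely the subspace of deformations preserving the isomorphism class of each graded piece, without imposing extra rigidity. A secondary difficulty is the combinatorial identification of the resulting explicit sum with the group-theoretic invariant $\langle 2\rho,\nu([b])\rangle$, uniformly across the classical PEL types; this is a root-system computation that requires care with the action of the Galois group on cocharacters and with the pairings induced by the polarization.
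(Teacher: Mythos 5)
The survey does not prove this statement; it cites Oort \cite{OortFol} and Hamacher \cite{Hamacher_Newt}, Cor.~7.8, and your outline is indeed in the spirit of the Chai--Oort/Hamacher approach (formal smoothness of the central leaf via the slope filtration, dimension count matched against $\langle 2\rho,\nu\rangle$). However, your tangent-space identification contains a genuine error. For $i<j$ the graded pieces $\mathbb{X}_j/\mathbb{X}_{j-1}$ and $\mathbb{X}_i/\mathbb{X}_{i-1}$ are isoclinic of \emph{distinct} slopes $\lambda_j\neq\lambda_i$, and over an algebraically closed field there are no nonzero homomorphisms between isoclinic $p$-divisible groups of different slopes (a nonzero map would induce a nonzero morphism of simple isocrystals of different slopes). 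So every summand $\Hom\bigl(\mathbb{X}_j/\mathbb{X}_{j-1},\,\mathbb{X}_i/\mathbb{X}_{i-1}\bigr)$ in your proposed tangent space is zero, and the formula as written computes $\dim C_{\mathbb{X}}=0$.

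What actually appears in Oort's and Hamacher's argument is not the $\mathbb{Z}_p$-module $\Hom$ of $p$-divisible groups, but the \emph{formal completion of the internal Hom sheaf} for pairs of graded pieces; for $\lambda_i<\lambda_j$ this is a nontrivial $p$-divisible \emph{formal} group, whose formal dimension is $(\lambda_j-\lambda_i)h_i h_j$ (with $h_i$ the heights). Moreover, the formal completion of $C_{\mathbb{X}}$ is not a direct sum of these pieces but an \emph{iterated fibration} (a cascade/biextension structure indexed by pairs $i<j$), although the dimension count is still additive and gives $\sum_{i<j}(\lambda_j-\lambda_i)h_ih_j=\langle 2\rho,\nu([b])\rangle$ after the polarization symmetry is taken into account. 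You should also flag that the reduction ``the central leaf dimension depends only on the isogeny class $[b]$'' is itself a nontrivial theorem of Chai--Oort, not a formal consequence of the definitions; it is true and used in the references, but it deserves explicit mention rather than being absorbed into the word ``harmless.''
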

Altogether we obtain
\begin{eqnarray*}
\dim~ \N_{\nu}&=&\dim~X_{\mu}(b)+\dim ~C_{\underline{\mathbb{X}}}\\
&=&\langle\rho,\mu+\nu\rangle-\frac{1}{2}\defect(b)\\
&=&\dim~ \S_K(G,X)- \langle\rho,\mu-\nu\rangle-\frac{1}{2}\defect(b)\\
&=&\dim ~\S_K(G,X)- \l([\nu,\mu]).
\end{eqnarray*}
Thus the codimension of every irreducible component of $\mathcal{N}_{\nu}$ is at least $\l([\nu,\mu])$, and Lemma \ref{lemtranslate} finishes the proof of Theorems \ref{thmgroth} and \ref{thmdim}.

~\\
\noindent{\it Acknowledgments.} I thank P. Hamacher for helpful discussions. The author was partially supported by ERC starting grant 277889 ``Moduli spaces of local $G$-shtukas''.

\end{document}